\documentclass[11pt,a4paper]{article}
\usepackage[utf8]{inputenc}
\usepackage{amsmath}
\usepackage{amsthm}
\usepackage{amsfonts}
\usepackage{amssymb}
\usepackage{fancyvrb}
\usepackage{graphicx}
\usepackage{listings}
\usepackage{hyperref}
\usepackage{url}
\usepackage{algorithm}
\usepackage{algpseudocode}
\usepackage{pdflscape}
\usepackage{tikz}
\usepackage{longtable}
\usepackage{array}
\usepackage{booktabs}
\usetikzlibrary{matrix}
\usepackage[backend=biber,style=numeric]{biblatex}
\usepackage[left=2cm, right=2cm, top=2cm, bottom=2cm]{geometry}

\definecolor{keywordcolor}{rgb}{0.7, 0.1, 0.1}   
\definecolor{keywordcolor}{cmyk}{0,0.90,0.86,0} 
\definecolor{tacticcolor}{rgb}{0.0, 0.1, 0.6}    
\definecolor{tacticcolor}{cmyk}{1,0.1,0,0.1} 
\definecolor{commentcolor}{rgb}{0.4, 0.4, 0.4}   
\definecolor{commentcolor}{cmyk}{1,0.1,0,0.1} 
\definecolor{symbolcolor}{rgb}{0.0, 0.1, 0.6}    
\definecolor{symbolcolor}{cmyk}{1,0.1,0,0.1} 
\definecolor{sortcolor}{rgb}{0.1, 0.5, 0.1}      
\definecolor{sortcolor}{cmyk}{0.20,0,1,0.19}    
\definecolor{attributecolor}{rgb}{0.7, 0.1, 0.1} 
\definecolor{attributecolor}{cmyk}{0,0.90,0.86,0} 

\def\mmlean{\lstinline[mathescape]}

\newtheorem*{theorem}{Theorem}
\newtheorem*{lemma}{Lemma}
\newtheorem*{corollary}{Corollary}
\newtheorem*{definition}{Definition}

\title{A Lean formalisation of the disproof of Ramaekers's conjecture}
\author{Jeremy Tan}

\begin{document}

\maketitle

\begin{abstract}
We describe Redhill, a Lean formalisation of the major results in Hölzl, Kleine and Stephan's ``Improved Lower Bounds for Strong $n$-Conjectures'' \cite{strongN}. Design decisions regarding types and data structures are justified as enabling \textit{canonical} versions of the paper's claims, and from there the most direct proofs of said claims. The formalisation process also uncovered many errors, omissions and optimisation opportunities in the informal proof, all of which are addressed in the final library.

Although Redhill is meant to stand alone, a few lemmas were upstreamed to the primary Lean mathematics library \verb|mathlib|. We discuss the tension between upstreaming and leaving lemmas in separate libraries.

Unlike many other contemporary formalisation projects, Redhill was completed without any assistance from large language models (LLMs). We compare our work with LLM-assisted projects, in particular an extension of \cite{strongN} to $n=4$, aiming to establish design and style guidelines that define good Lean formalisations.
\end{abstract}

\section{Introduction} \label{introduction}

Part of the promise of the fourth version of the Lean programming language and theorem prover \cite{lean4}, with its emphasis on efficiency, extensibility in the language itself and ease of use, was to give every correct theorem in mathematics a readable machine-checkable proof. We say a formal proof is \textit{readable} when a human can understand it without any more difficulty than a corresponding informal version. Such proofs bring out the best of formal mathematics: not only do they expose implicit assumptions and abuses of notation in informal proofs -- what Bauer called ``invisible mathematics'' \cite{invisible} -- they also invite post-formalisation improvements, extraction of intermediate lemmas and development of further theory, whether by humans, LLMs or a combination of the two.

Large-scale formalisation projects using Lean and its primary mathematics library \verb|mathlib| \cite{mathlib} began not long after the latter could express the most common mathematical constructs -- real numbers, complex numbers, groups, rings, etc. Since they were started well before LLMs became commonplace and the effort required to formalise their background theory was not always estimable, they often took the form of Polymath-style collaborations: project maintainers drew up a blueprint, dividing the proof into a web of sorries (unproved Lean statements) that human contributors then solved separately. Examples of such collaborations are:
\begin{itemize}
    \item The Liquid Tensor Experiment \cite{liquid-tensor}, an early application of Lean to frontier mathematics research. Unlike the three projects below, formalising the theorem statements was a difficult task in itself because the definitions involved were novel.
    \item Fermat's Last Theorem for regular primes \cite{flt-regular}. Class field theory, the basis for modern proofs of this result, had not yet been formalised as a cohesive unit, so the proof proceeded through a series of more elementary lemmas numbered in Hilbert's \textit{Zahlbericht} \cite{zahlbericht}. The upgrading of Lean to its fourth version and the ensuing port of \verb|mathlib| also occurred during this project, which the maintainers had to wrangle with.
    \item The Equational Theories project \cite{equational}, which explored the implications between magmas defined by a single equation. External automated theorem provers like Vampire \cite{vampire} were used for some tasks (proving an implication or refuting one via counterexample), with their output translated to Lean.
    \item The Carleson project \cite{carleson} to prove a wide generalisation of Carleson's 1966 result on convergence of Fourier series \cite{carleson-original}, which the present author contributed to significantly. Many tasks in this project's blueprint revolved around estimation of integrals, for which the parallel development of measure theory and Lebesgue integration in \verb|mathlib| was of paramount importance. Shortly after the last sorry was filled, the constant factor involved in its statement was optimised to about the tenth \textit{root} of its original value ($2^{443a^3}$ to $2^{44a^3}$), illustrating the usefulness of a good formalisation beyond its own body of code.
\end{itemize}

All these projects were conducted in the open, turning theorem proving into an instance of open-source collaborative coding. They also depended upon \verb|mathlib| and \textit{expected} to improve the latter as a by-product, either by upstreaming missing lemmas useful in a wider context or by suggesting definition and implementation changes to speed up proof checking (cf. section 9 of \cite{flt-regular}). Their success suggested that Lean's future and the future of mathematics would remain human-centred.

By the time the Carleson project finished in July 2025, however, LLMs had become capable of \textit{autoformalisation}: taking an informal proof as input (or generating one autonomously) and expressing it in Lean to validate the informal reasoning. Both dedicated models (Aristotle \cite{aristotle}, Seed Prover \cite{seedprover}) and adapted versions of general-purpose models (GPT, Claude) have been employed for this purpose; the convenience they provide -- a ``fully correct'' proof from a few prompts -- has enabled many individuals and small teams to formalise theorems in much less time than they would need to understand their informal proofs. Examples include:
\begin{itemize}
    \item The classification of convex regular polytopes in all dimensions \cite{platonic-classification}. Literature on this classical result is sparse, partly because of the influence of Coxeter's \textit{Regular Polytopes} \cite{coxeter} which relies on geometric intuition in many places, yet the formalisation took only five days. This project was submitted to the autoformalisation benchmark \verb|lean-eval| \cite{lean-eval}, which provided the specific formal statement to be proved.
    \item The strong perfect graph theorem \cite{pegden-spgt}, formalised with the help of a deterministic workflow ensuring that LLM agents make substantive progress on the target statements \cite{pegden-trellis}.
    \item The Feit--Thompson theorem and related foundational theorems in the classification of finite simple groups \cite{formatheoria}. The authors developed a more elaborate workflow capable of aligning and extracting common definitions and lemmas -- their goal was to generate a library that could be built upon like \verb|mathlib|. We will see in section \ref{llmcomp} that such agent coordination workflows are \textit{inherently inadequate} for the aforementioned goal.
    \item Anthropic's formalisation of the full Fermat's Last Theorem \cite{anthropic-flt} in 11 days, using orders of magnitude more computational power than the above three projects. Its approach is essentially that of Wiles's original proof from 1995, not the modern route developed in Imperial College London's \href{https://github.com/ImperialCollegeLondon/FLT}{FLT repository} \cite{buzzard}.
\end{itemize}

All these LLM-assisted projects have one thing in common:
\begin{center}
    \textit{They only care that their target theorems are true -- and sacrifice everything else for it.}
\end{center}

The general availability of autoformalisers has sparked a gold rush where the ores are the entire mathematics literature, the gold is formal proofs (regardless of quality) and the miners are impatient people using LLMs. \verb|lean-eval|'s setup, displaying an LLM leaderboard while allowing submissions to hide their code, has only accelerated this rush. There are projects such as \cite{kourovka} using LLMs with all due respect to humans, but they are a rarity.

Without the careful human guidance they deserve, the ``fully correct'' proofs coming out of LLMs represent the \textit{worst} kind of formal mathematics. Their definitions are non-canonical and their code is disorganised, and as a result they become write-only. Against the proliferation of ``merely true'' theorems and as a demonstration of high-quality Lean code, in this paper we present Redhill, a library formalising the disproof of Ramaekers's conjecture for all $n\ge4$ and other related theorems. The library may be found at \url{https://github.com/Parcly-Taxel/Redhill}.

The rest of this paper is organised as follows. Section \ref{leanstruct} gives an overview of what goes into a Lean proof from a theorem-proving perspective, in particular the tactics used to manipulate terms and goals. Section \ref{common-structures} discusses common constructs used throughout Redhill and surrounding design decisions, while section \ref{disproof} describes the major theorems formalised. Section \ref{upstreaming} is about upstreaming lemmas from dependent projects like Redhill to \verb|mathlib|, section \ref{llmcomp} compares our project to LLM-assisted projects (in particular the extension \cite{adamczewski} of \cite{strongN} to $n=4$) and section \ref{conclusion} concludes.

\section{Anatomy of a Lean proof} \label{leanstruct}

The Curry--Howard correspondence matches mathematical expressions (\textit{terms} in Lean's terminology) to types in a given type system. Lean's type system elaborates the calculus of inductive constructions \cite{lean4}, so a proposition \verb|p| is a term of type \verb|Prop| and \verb|p| is true or false as there exists or does not exist a term of type \verb|p|. Proving \verb|p| requires explicitly constructing a term of type \verb|p|, while disproving \verb|p| requires constructing a term of type \verb|False| from \verb|p| -- \href{https://lean-lang.org/doc/reference/latest/Basic-Propositions/Truth/#False}{by definition}, there are no terms of type \verb|False|.

A Lean\footnote{From here on any mention of Lean will implicitly include \Verb|mathlib|, as virtually every public Lean project proving a mathematical theorem has \Verb|mathlib| as a dependency. Some keywords and tactics are only defined in \Verb|mathlib|.} proof is a directed acyclic graph of terms culminating in a term of the desired type. Each term manifests as a code block or \textit{declaration} beginning with a keyword followed by its type. There are three main kinds of terms:
\begin{itemize}
    \item \textbf{Theorems} proving propositions are indicated by the keywords \verb|theorem| and \verb|lemma|. There is no semantic difference between these keywords; the distinction exists solely for humans to distinguish ``more important'' and ``less important'' theorems.
    \item \textbf{Definitions} constructing non-propositional data such as functions are indicated by the keywords \verb|def|, \verb|abbrev|, \verb|structure| and \verb|inductive|. A \verb|def| needs to be unfolded in a proof by tactics (described below) to access its definition, while an \verb|abbrev| does not. \verb|inductive| is the most general way of defining new \textit{types} in Lean; \verb|structure| defines a non-recursive type, akin to \verb|struct|s in C and C++.
    \item \textbf{Type classes} are indicated by the keyword \verb|class| and instantiated by \verb|instance|. Redhill does not define any type classes or instances because none of its results need to be \textit{ad hoc} polymorphic, but larger projects like \verb|mathlib| use them extensively \cite{instance-abuse}.
\end{itemize}

The following \verb|mathlib| \href{https://github.com/leanprover-community/mathlib4/blob/7364908aff0a24d686c6a59938e32a7c540ad50b/Counterexamples/HeawoodUnitDistance.lean#L38-L49}{snippet} illustrates \verb|def|, \verb|instance| and \verb|lemma| declarations in practice.

\begin{lstlisting}
def heawoodGraph : SimpleGraph (Fin 14) where
  Adj i j := (i - j).1 = 1            ∨ (j - i).1 = 1 ∨
             (i - j).1 = 5 ∧ Even j.1 ∨ (j - i).1 = 5 ∧ Even i.1

instance : DecidableRel heawoodGraph.Adj :=
  inferInstanceAs <| DecidableRel fun (i j : Fin 14) ↦
    (i - j).1 = 1 ∨ (j - i).1 = 1 ∨ (i - j).1 = 5 ∧ Even j.1 ∨ (j - i).1 = 5 ∧ Even i.1

lemma heawoodGraph_edgeFinset :
    heawoodGraph.edgeFinset = (univ.image fun i ↦ s(i, i + 1)) ∪
      (univ.filter (Even ·.1)).image fun i ↦ s(i, i + 5) := by
  decide +kernel
\end{lstlisting}

Of course, since multiple theorems may share the same proof foundations and the dependencies between lemmas are often sparse, declarations can be split across multiple files. \textit{How} to split to best reflect proof structure is something LLMs struggle at, as we will discuss in section \ref{llmcomp}.

\subsection{Tactic vocabulary} \label{tactic-vocab}

While it is theoretically possible to write a proof term for any true proposition, such terms are lengthy and unwieldy for all but the simplest propositions. \href{https://lean-lang.org/theorem_proving_in_lean4/Tactics}{\textit{Tactics}} get around this by representing a proof as a sequence of commands operating on the proposition to be proved, referred to as the \textit{goal}. A tactic can generate new goals, but every goal must eventually be closed.

Each tactic can be rephrased as one or more steps of an informal proof. Some of the tactics used in Redhill, together with their informal rephrasings, are described below.
\begin{itemize}
    \item \verb|exact t|, with \verb|t| a term, closes a goal whose type exactly matches \verb|t|'s. ``The goal is true by \verb|t|.''
    \item \verb|refine t| closes a goal whose type matches \verb|t|'s \textit{conditional} on any holes \verb|?_| appearing in \verb|t|. These holes become new goals to be closed. ``If the indicated holes are true, the goal is true by \verb|t|.''
    \item \verb|apply t| tries to close the goal by matching \verb|t| to it. Like \verb|refine|, the original goal is replaced by one new goal for every point where the matching did not succeed; it is more parsimonious to use \verb|exact| when no new goals are generated. ``If all differences between \verb|t| and the goal can be reconciled, the goal is true by \verb|t|.''
    \item \verb|rw [t1, t2, ...]|, where the arguments have types $a=b$ or $a\leftrightarrow b$, considers each argument in turn and replaces occurrences of $a$ in the goal by $b$. Prefixing an argument by $\leftarrow$ replaces occurrences of $b$ in the goal by $a$ instead. If the resulting goal is true by reflexivity, it is closed by implicitly running the \verb|rfl| tactic. ``The goal can be rewritten using \verb|t1, t2, ...|''
    \item \verb|simp only [t1, t2, ...]| iteratively rewrites using the provided arguments as many times as possible, closing the goal by \verb|rfl| if possible. It is more powerful but less precise than \verb|rw|: the arguments can either be of the types accepted by \verb|rw| or definitions to unfold, and \verb|simp only| can rewrite in places where \verb|rw| cannot, but there is no control over the extent of rewriting. ``The goal can be simplified using \verb|t1, t2, ...|''
    \item \verb|simp [t1, t2, ...]| is like \verb|simp only|, but adds a global set of lemmas tagged with the \verb|@[simp]| attribute to the arguments. Because this global set may change as a library develops, the Lean language reference \href{https://lean-lang.org/doc/reference/4.33.0/The-Simplifier/Terminal-vs-Non-Terminal-Positions}{recommends} that it only be used where it closes the goal. ``Simplifying gives the result (details are left to the reader).''
    \item \verb|simp_rw [t1, t2, ...]| is roughly equivalent to \verb|simp only| with each argument in order.
    \item \verb|cases t using pr with| performs case analysis on \verb|t|, generating one new goal for each case. \verb|using pr| is optional and specifies a different splitting method from the default one naturally arising from the definition of \verb|t|'s type. ``We split into different cases depending on \verb|t|.''
    \item \verb|induction t using pr with| extends \verb|cases| by making induction hypotheses available when proving case-specific goals. ``We proceed by induction on \verb|t|, using the induction principle \verb|pr|.''
    \item \verb|have : t| declares an arbitrary proposition, which must be proved as a new goal. ``We first prove the auxiliary lemma \verb|t|.''
    \item \verb|suffices t| produces the same goals as \verb|have t| but reverses their order, so the proof of the original goal from \verb|t| comes first. ``It suffices to prove \verb|t| because\textellipsis''
    \item \verb|decide| closes a goal with no free variables that can be effectively computed (a \verb|Decidable| instance for the goal exists or can be synthesised). Natural numbers, integers, finite types and finite sets are effectively computable for \verb|decide|'s purposes, but real numbers are not. ``The goal is true by computation in the Lean kernel.''
    \item \href{https://lean-lang.org/doc/reference/4.33.0/The--grind--tactic}{\Verb|grind [t1, t2, ...]|} closes a goal (and fails otherwise) by assuming its negation, then repeatedly deriving relations from lemmas tagged with the \verb|@[grind]| attribute, available hypotheses and provided arguments until reaching a contradiction. ``The goal is true by applying rules akin to a satisfiability modulo theories (SMT) solver. Details are left to the reader.''
    \item \href{https://lean-lang.org/doc/reference/4.33.0/The--grind--tactic/Linear-Integer-Arithmetic}{\Verb|lia [t1, t2, ...]|} is a tactic internally called by \verb|grind|, closing goals in linear integer arithmetic (linear constraints on integers and natural numbers with divisibility).
    \item \verb|ring| closes equations that follow from the axioms of commutative rings like $\mathbb Z$ or commutative semirings like $\mathbb N$. ``The goal is true by algebraic rearrangement.''
\end{itemize}

\section{Common structures} \label{common-structures}

The $n$-conjectures in the title of Redhill's primary source \cite{strongN} revolve around the \textit{quality} of particular sets of $n$-tuples of integers. The quality of one tuple $(a_1,\dots,a_n)$ is
\begin{equation}\label{eq:quality}
    \frac{\log(\max_i|a_i|)}{\log(\operatorname{rad}(\prod_i a_i))}
\end{equation}
where $\operatorname{rad}(n)$ is the radical, the product of all distinct prime factors of $n$. As the foundational definition, it was essential to formalise quality in Redhill in a canonical way: a human should have no doubt that the informal definition was translated correctly \textit{and} the formal definition should not pose unnecessary difficulties in declarations building on it. We chose the following \href{https://github.com/Parcly-Taxel/Redhill/blob/arxiv-fixed/Redhill/Common/Quality.lean#L24-L28}{formalisation} of (\ref{eq:quality}):
\begin{lstlisting}
open UniqueFactorizationMonoid in
/-- The quality of a single tuple.
This depends on Lean defining `log -x = log x` for all real `x`. -/
@[expose] noncomputable def tupleQuality (a : Fin n → ℤ) : ℝ≥0∞ :=
  .ofReal (log (maxAbs a) / log (radical (∏ i, a i) : ℤ))
\end{lstlisting}

The natural and hence canonical type for an $n$-tuple of integers is \mmlean{Fin n$\:\to\mathbb Z$}, a function from the indexing type \verb|Fin n| to the entry type $\mathbb Z$. The \href{https://github.com/leanprover/lean4/blob/470d5ce1400764999581fd26d5d72b00d990b0f4/src/Init/Prelude.lean#L2443-L2462}{documentation} for \verb|Fin n|, the natural numbers less than $n$, describes it as ``the canonical type with $n$ elements''.

\verb|maxAbs| is \href{https://github.com/Parcly-Taxel/Redhill/blob/arxiv-fixed/Redhill/Common/MaxAbs.lean#L21-L23}{defined} in an earlier file in Redhill:
\begin{lstlisting}
/-- The maximum absolute value of a tuple of integers (0 if empty). -/
def maxAbs (a : Fin n → ℤ) : ℕ :=
  univ.sup fun i ↦ (a i).natAbs
\end{lstlisting}

Here \verb|univ| is \href{https://github.com/leanprover-community/mathlib4/blob/331d5244f0d3aad530d9ab00ded135b4c7691502/Mathlib/Data/Fintype/Defs.lean#L92}{\Verb|Finset.univ|}, the finite set or \textit{finset} of all possible indices, and the appended \verb|.sup| is \href{https://lean-lang.org/doc/reference/latest/Terms/Function-Application/#generalized-field-notation}{generalised field notation} for \href{https://github.com/leanprover-community/mathlib4/blob/331d5244f0d3aad530d9ab00ded135b4c7691502/Mathlib/Data/Finset/Lattice/Fold.lean#L43}{\Verb|Finset.sup|}, which applies its argument to each element of the finset and returns the maximum value. We chose this definition over alternatives using \href{https://github.com/leanprover-community/mathlib4/blob/331d5244f0d3aad530d9ab00ded135b4c7691502/Mathlib/Data/Finset/Max.lean#L37}{\Verb|Finset.max|}/\href{https://github.com/leanprover-community/mathlib4/blob/331d5244f0d3aad530d9ab00ded135b4c7691502/Mathlib/Data/Finset/Max.lean#L134}{\Verb|Finset.max'|} because it requires no special-casing to evaluate to $0$ for the empty tuple.

The seemingly extraneous \href{https://github.com/leanprover-community/mathlib4/blob/4beb549110aa44b87d699166cece6e3642bddf34/Mathlib/Basic/ENNReal/Basic.lean#L233}{\Verb|ENNReal.ofReal|} in \verb|tupleQuality|'s definition, which casts the quotient to an extended non-negative real number (\href{https://github.com/leanprover-community/mathlib4/blob/4beb549110aa44b87d699166cece6e3642bddf34/Mathlib/Basic/ENNReal/Basic.lean#L104}{\Verb|ENNReal|}, \mmlean{ℝ≥0∞}), helps in two ways. Firstly, it helps the succeeding definition of the quality of a set of tuples, \href{https://github.com/Parcly-Taxel/Redhill/blob/arxiv-fixed/Redhill/Common/Quality.lean#L30-L33}{formalised} as follows:
\begin{lstlisting}
/-- The quality of a set of tuples, defined as the infimum of those numbers where
only finitely many tuples in the set have a strictly higher quality. -/
noncomputable def quality (A : Set (Fin n → ℤ)) : ℝ≥0∞ :=
  sInf {q | {a ∈ A | q < tupleQuality a}.Finite}
\end{lstlisting}

This is easily seen to be equivalent to enumerating the countably many tuples in \verb|A| and taking the limit superior of their qualities, which is how \cite{strongN} defines set quality. By avoiding filters and limits, proving theorems about \verb|quality| becomes easier, while using the \textit{complete} type \verb|ENNReal| obviates the need to prove boundedness conditions when applying theorems about the set infimum \href{https://github.com/leanprover-community/mathlib4/blob/4952ca049727ba250943fb27f5fb853a79d7ceda/Mathlib/Order/Bounds/Defs.lean#L77}{\Verb|sInf|}.

Secondly, the \textit{non-negativity} of \verb|ENNReal| reflects how the quotient passed to \verb|ENNReal.ofReal| is always non-negative, given Lean's definitions of division and the (natural) logarithm. They are total functions with definite values outside their traditionally understood domains: \verb|x / 0 = 0|, \verb|log 0 = 0| and \verb|log (-x) = log x|. The \verb|log| of an integer is thus non-negative, and we need not worry about \verb|ENNReal.ofReal| losing information by sending negative numbers to $0$.

The totality of division and \verb|log| also allows us to not impose well-definedness conditions on the argument to \verb|tupleQuality| for simplicity -- equation (\ref{eq:quality}) is well-defined iff $|\prod_i a_i|>1$, which every tuple in the disproof of Ramaekers's conjecture satisfies.

\subsection{Tuple predicates}

Each $n$-conjecture asserts something about the quality of the set of all $n$-tuples of integers summing to $0$ and satisfying some other conditions:
\begin{itemize}
    \item Browkin and Brzeziński's \href{https://github.com/Parcly-Taxel/Redhill/blob/arxiv-fixed/Redhill/Common/Conjectures.lean#L24-L31}{$n$-conjecture} \cite{bb94} requires $\gcd(a_1,\dots,a_n)=1$ and the \textit{subsum condition} that no proper subset of the $a_i$ sums to $0$. It asserts that the resulting set's quality equals $2n-5$, thereby reducing to one form of the $abc$-conjecture at $n=3$.
\begin{lstlisting}
/-- The subsum condition: any subset of the tuple summing to 0
is either empty or the whole tuple. -/
def SSC : Prop :=
  ∀ b, b.Nonempty → bᶜ.Nonempty → ∑ i ∈ b, a i ≠ 0

/-- The tuples in Browkin and Brzezinski's `n`-conjecture. `A(n)` in the paper. -/
def nConjectureTuples (n : ℕ) : Set (Fin n → ℤ) :=
  {a | ∑ i, a i = 0 ∧ SSC a ∧ univ.gcd a = 1}

/-- Browkin and Brzezinski's **`n`-conjecture** for a fixed `n`.
The conjecture itself is `∀ n ≥ 3, NConjecture n`. -/
def NConjecture (n : ℕ) : Prop :=
  quality (nConjectureTuples n) = (2 * n - 5 : ℕ)
\end{lstlisting}
    \verb|b| in Redhill's \href{https://github.com/Parcly-Taxel/Redhill/blob/arxiv-fixed/Redhill/Common/SubsumCondition.lean#L24-L27}{formalisation} of the subsum condition is the finset of indices defining the ``proper subset'' that must not sum to $0$.
    \item Browkin's \href{https://github.com/Parcly-Taxel/Redhill/blob/arxiv-fixed/Redhill/Common/Conjectures.lean#L33-L40}{strong $n$-conjecture} \cite{brow00} requires the $a_i$ to be pairwise coprime but imposes no subsum condition, and asserts that the resulting set's quality is finite.
\begin{lstlisting}
/-- A predicate stating that the given tuple's numbers are pairwise coprime. -/
abbrev PairwiseCoprime {n : ℕ} (a : Fin n → ℤ) : Prop :=
  Pairwise fun i j ↦ IsCoprime (a i) (a j)

/-- The tuples in Browkin's strong `n`-conjecture. `B(n)` in the paper. -/
def strongNConjectureTuples (n : ℕ) : Set (Fin n → ℤ) :=
  {a | ∑ i, a i = 0 ∧ PairwiseCoprime a}

/-- Browkin's **strong `n`-conjecture** for a fixed `n`.
The conjecture itself is `∀ n ≥ 3, StrongNConjecture n`. -/
def StrongNConjecture (n : ℕ) : Prop :=
  quality (strongNConjectureTuples n) < ⊤
\end{lstlisting}
    \item \href{https://github.com/Parcly-Taxel/Redhill/blob/arxiv-fixed/Redhill/Common/Conjectures.lean#L42-L49}{Ramaekers's conjecture} \cite{ramaekers} requires the $a_i$ to be pairwise coprime \textit{and} the subsum condition, and asserts that the resulting set's quality is $1$.
\begin{lstlisting}
/-- The tuples in Ramaekers's conjecture. `R(n)` in the paper. -/
def ramaekersTuples (n : ℕ) : Set (Fin n → ℤ) :=
  {a | ∑ i, a i = 0 ∧ SSC a ∧ PairwiseCoprime a}

/-- **Ramaekers's conjecture** for a fixed `n`.
The conjecture itself is `∀ n ≥ 3, RamaekersConjecture n`. -/
def RamaekersConjecture (n : ℕ) : Prop :=
  quality (ramaekersTuples n) = 1
\end{lstlisting}
\end{itemize}

To tackle the strong $n$-conjecture and Ramaekers's conjecture, the authors of \cite{strongN} fix a finset of natural numbers not less than $3$ as \textit{forbidden factors} for the $a_i$ and strengthen the subsum condition:
\begin{definition}
    A tuple $(a_1,\dots,a_n)$ satisfies the strong subsum condition if for any tuple $(b_1,\dots,b_n)$ with $b_i\in\{-1,0,1\}$ and $\sum_i b_ia_i=0$, $b_1=\dots=b_k$. This implies the subsum condition, where $b_i\in\{0,1\}$.
\end{definition}

In the sequel we abbreviate ``strong subsum condition'' to its Redhill name \href{https://github.com/Parcly-Taxel/Redhill/blob/arxiv-fixed/Redhill/Common/SubsumCondition.lean#L34-L36}{\Verb|StrongSSC|}.

\newpage

\begin{lstlisting}
/-- A subsum block for `a` is an index set that must be constant in any sign weighting
of the tuple's elements that leads to a zero sum. -/
def IsSubsumBlock (s : Finset (Fin n)) : Prop :=
  ∀ b : Fin n → SignType, ∑ i, b i * a i = 0 → ∃ c, ∀ i ∈ s, b i = c

/-- The strong subsum condition, defined as all `Fin n` being a subsum block. -/
def StrongSSC : Prop :=
  IsSubsumBlock a univ

/-- `U(F,n)` in the paper. -/
def factorFreeTuples (F : Finset ℕ) (n : ℕ) : Set (Fin n → ℤ) :=
  {a | ∑ i, a i = 0 ∧ StrongSSC a ∧ PairwiseCoprime a ∧ ∀ f ∈ F, ∀ i, ¬↑f ∣ a i}
\end{lstlisting}

We introduce the idea of a \href{https://github.com/Parcly-Taxel/Redhill/blob/arxiv-fixed/Redhill/Common/SubsumCondition.lean#L29-L32}{\textit{subsum block}} to help prove \verb|StrongSSC|.
\begin{definition}
    Given a tuple $(a_1,\dots,a_n)$, a sub-tuple $(a_{i_1},\dots,a_{i_k})$ is a subsum block if for any tuple $(b_1,\dots,b_n)$ with $b_i\in\{-1,0,1\}$ and $\sum_i b_ia_i=0$, $b_{i_1}=\dots=b_{i_k}$.
\end{definition}

It is clear that if $(a_{i_1},\dots,a_{i_k})=(a_1,\dots,a_n)$, \verb|StrongSSC| is satisfied. The following lemmas allow proving \verb|StrongSSC| by recursing on shorter tuples.
\begin{lemma}[\href{https://github.com/Parcly-Taxel/Redhill/blob/arxiv-fixed/Redhill/Common/SubsumCondition.lean\#L229}{\Verb|strongSSC\_tupReduce|}]
    If the tuple obtained by summing the elements of a subsum block into one new element satisfies \verb|StrongSSC|, so does the original tuple.
\end{lemma}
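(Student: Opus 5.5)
Let $a=(a_1,\dots,a_n)$ and let $s$ be a subsum block for $a$. Let $a'$ be the reduced tuple: the indices outside $s$ are kept, and one new entry $\sigma=\sum_{i\in s}a_i$ is appended. We assume $a'$ satisfies \verb|StrongSSC|. Fix any sign weighting $b:\{1,\dots,n\}\to\{-1,0,1\}$ with $\sum_i b_ia_i=0$; we must show $b$ is constant on all indices. The plan is to push $b$ down to a sign weighting $b'$ of $a'$, apply \verb|StrongSSC| for $a'$, and pull the conclusion back up.

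First, because $s$ is a subsum block and $\sum_i b_ia_i=0$, there is a sign $c\in\{-1,0,1\}$ with $b_i=c$ for all $i\in s$. Define $b'$ by setting $b'_j=b_j$ at each kept index $j\notin s$ and giving the new entry the weight $c$. Then
\[
\sum_j b'_ja'_j=\sum_{j\notin s}b_ja_j+c\sum_{i\in s}a_i=\sum_{j\notin s}b_ja_j+\sum_{i\in s}b_ia_i=\sum_i b_ia_i=0 .
\]
So \verb|StrongSSC| for $a'$ makes $b'$ constant, equal to some $d$. The new entry then gives $c=d$, and every kept index $j$ gives $b_j=d$. Hence $b_i=d$ for every $i$, whether $i\in s$ or not, which is exactly \verb|StrongSSC| for $a$.

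If $s$ is empty, $c$ is unconstrained and the new entry is $\sigma=0$. Taking $c=b'$'s constant value still works, but the cleanest route is to pick $c$ only after noting that $s$ is nonempty. Alternatively, choose $c$ freely, say $0$; the zero entry contributes nothing to the sum, so constancy of $b'$ still forces the $b_j$ outside $s$ to agree. The degenerate case is therefore harmless either way.

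I expect the main obstacle to be bookkeeping of the index types rather than mathematics. The reduced tuple lives on a type like $\{j\notin s\}\oplus\mathrm{Unit}$, reindexed to \verb|Fin| of the right length. The sum identity above needs splitting $\sum_i$ over $s$ and its complement, and transporting sums along the reindexing equivalence. I would first state the reduction with an arbitrary index type, prove the sum-splitting identity as a separate lemma, and only then specialise to \verb|Fin|.
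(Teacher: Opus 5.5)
Your proof is correct and is the same definition-unfolding argument the paper intends: use the block constant $c$ to push the zero-sum weighting down to the reduced tuple, apply \verb|StrongSSC| there, and pull the constancy back to the original tuple. Your separate treatment of an empty block is unnecessary, because the identity $c\sum_{i\in s}a_i=\sum_{i\in s}b_ia_i$ holds trivially when $s=\emptyset$, so your main argument already covers that case.
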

\begin{proof}
    Expand the definition of \verb|StrongSSC| for the original and reduced tuples.
\end{proof}
\begin{lemma}[\href{https://github.com/Parcly-Taxel/Redhill/blob/arxiv-fixed/Redhill/Common/SubsumCondition.lean\#L181}{\Verb|of\_sum\_natAbs\_lt|}]
    Given a tuple $(a_1,\dots,a_n,c_1,\dots,c_m)$, $(c_1,\dots,c_m)$ is a subsum block if for all non-constant tuples $(b_1,\dots,b_m)$ with $b_j\in\{-1,0,1\}$ we have
    \begin{equation*}
        \sum_{i=1}^n|a_i|<\left|\sum_{j=1}^m b_jc_j\right|.
    \end{equation*}
\end{lemma}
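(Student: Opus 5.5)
We argue by contraposition on the definition of a subsum block. Take any sign tuple $(b_1,\dots,b_n,b'_1,\dots,b'_m)$ with entries in $\{-1,0,1\}$ and
\begin{equation*}
    \sum_{i=1}^n b_ia_i+\sum_{j=1}^m b'_jc_j=0.
\end{equation*}
We must show that $b'_1=\dots=b'_m$, that is, that the restriction $(b'_1,\dots,b'_m)$ of the weighting to the $c$-indices is constant.

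Suppose instead that $(b'_1,\dots,b'_m)$ is non-constant. The hypothesis then gives $\sum_i|a_i|<\left|\sum_j b'_jc_j\right|$. The zero-sum equation rewrites as $\sum_j b'_jc_j=-\sum_i b_ia_i$. Taking absolute values and applying the triangle inequality with $|b_i|\le1$ yields
\begin{equation*}
    \left|\sum_j b'_jc_j\right|=\left|\sum_i b_ia_i\right|\le\sum_i|b_i||a_i|\le\sum_i|a_i|.
\end{equation*}
This contradicts the strict inequality, so the restriction is constant, as required.

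Formally, the tuple is indexed by \mmlean{Fin (n + m)} (via \verb|Fin.append|) and the block is the image of \verb|Fin.natAdd n|. The main obstacle is bookkeeping rather than mathematics. We must split $\sum_{k:\mathrm{Fin}(n+m)}$ into its two parts using \verb|Fin.sum_univ_add|. We must also convert ``$\neg\exists c,\ \forall i\in s,\ b_i=c$'' into the non-constancy hypothesis as the lemma phrases it, and bound $|b_i a_i|\le|a_i|$ for \verb|SignType| casts. The last of these we would obtain from a case split on $b_i$ (or \verb|SignType.cast| lemmas), then sum with \verb|Finset.abs_sum_le_sum_abs|. The final comparison of natural-number absolute values, via \verb|natAbs| casts, should close by \verb|lia| or \verb|grind|.
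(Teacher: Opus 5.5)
Your proof is correct and follows the paper's argument: by the triangle inequality the $a$-part of a zero sign-weighted sum has magnitude at most $\sum_i|a_i|$, so the $c$-part cannot cancel it unless its weights are constant. Your remarks on formalisation (splitting the sum over \texttt{Fin (n + m)}, bounding $|b_ia_i|\le|a_i|$, and closing with \texttt{lia} or \texttt{grind}) are a reasonable outline of the extra bookkeeping the Lean proof needs.
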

\begin{proof}
    Consider a sign-weighted sum of the entire tuple where $a_i$ has weight $d_i\in\{-1,0,1\}$:
    \begin{equation*}
        \sum_{i=1}^n d_ia_i+\sum_{j=1}^m b_jc_j
    \end{equation*}
    By the triangle inequality the first sum's magnitude cannot exceed $\sum_{i=1}^n|a_i|$, so the second sum cannot exactly cancel out the first sum \textit{unless $(b_1,\dots,b_m)$ is a constant tuple}, which is the definition of $(c_1,\dots,c_m)$ being a subsum block.
\end{proof}
\begin{corollary}[\href{https://github.com/Parcly-Taxel/Redhill/blob/arxiv-fixed/Redhill/Common/SubsumCondition.lean\#L199}{\Verb|pair\_of\_sum\_natAbs\_lt|}]
    Given a tuple $(a_1,\dots,a_n,x,y)$, $(x,y)$ is a subsum block if the following inequalities hold:
    \begin{equation*}
        \sum_{i=1}^n|a_i|<|x|\qquad\sum_{i=1}^n|a_i|<|y|\qquad xy\le0
    \end{equation*}    
\end{corollary}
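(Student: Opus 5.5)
This corollary follows from the lemma \verb|of_sum_natAbs_lt| with $m=2$ and $(c_1,c_2)=(x,y)$. Write $S=\sum_{i=1}^n|a_i|$. By that lemma, it suffices to show that $S<|b_1x+b_2y|$ for every non-constant pair $(b_1,b_2)\in\{-1,0,1\}^2$. There are exactly six such pairs: $(\pm1,0)$, $(0,\pm1)$, $(1,-1)$ and $(-1,1)$. The pairs $(0,0)$, $(1,1)$ and $(-1,-1)$ are constant and are excluded.

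I would split into these six cases.
\begin{itemize}
\item For $(\pm1,0)$ the weighted sum is $\pm x$, whose absolute value is $|x|>S$ by the first hypothesis.
\item For $(0,\pm1)$ the weighted sum has absolute value $|y|>S$ by the second hypothesis.
\item For $(1,-1)$ and $(-1,1)$ the weighted sum is $\pm(x-y)$, so its absolute value is $|x-y|$.
\end{itemize}

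The main step is the claim that $|x-y|=|x|+|y|$ when $xy\le0$. Since $x$ and $-y$ then have the same sign (or one of them is zero), the triangle inequality for $x+(-y)$ is an equality. This gives $|x-y|=|x|+|y|\ge|x|>S$; strictly, one only needs $|x-y|\ge|x|$.

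To prove the claim, I would case on the signs of $x$ and $y$, or derive it directly from $xy\le0$ via \verb|abs_sub| style lemmas. In Redhill this is a linear-arithmetic fact once split into sign cases, so \verb|lia| or \verb|grind| should close it after case analysis.

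I expect the only real obstacle to be the formal bookkeeping. One has to enumerate the non-constant \verb|SignType| tuples on \verb|Fin 2| and match the index type $\mathtt{Fin}\,(n+2)$ to the lemma's split into $a$ and $c$. Concretely, I would apply \verb|of_sum_natAbs_lt|, then \verb|intro b hb|, then perform \verb|cases| on $b\,0$ and $b\,1$. The constant cases are discharged by contradiction with \verb|hb|, and the remaining six go to \verb|lia| using the three hypotheses.
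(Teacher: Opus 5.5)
Your proposal is correct and follows the paper's proof exactly: apply \verb|of_sum_natAbs_lt|, check the six non-constant sign pairs, and use $xy\le0\Rightarrow|x-y|=|x|+|y|$ for the two mixed pairs. One formal caveat: $xy\le0$ is nonlinear, so \verb|lia| cannot use it directly even after a bare sign split on $x$ and $y$, and you should first convert it into sign conditions with a non-positive-product iff lemma (the paper upstreamed exactly such a lemma to \verb|mathlib| in PR \#36340 for this proof).
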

\begin{proof}
    There are six possible ways to associate different signs to $x$ and $y$:
    \begin{equation*}
        (0,1)\qquad(0,-1)\qquad(1,0)\qquad(-1,0)\qquad(1,-1)\qquad(-1,1)
    \end{equation*}
    It is easy to check that the strict inequality condition in \verb|of_sum_natAbs_lt| is satisfied in each case. The last two cases satisfy the inequality because $xy\le0$ implies $|x-y|=|y-x|=|x|+|y|$.
\end{proof}

\subsection{Prime chains} \label{primechain}

To produce tuples in \verb|factorFreeTuples F n| we can start with a strictly increasing sequence of primes, which are obviously pairwise coprime. This is implemented in Redhill as \href{https://github.com/Parcly-Taxel/Redhill/blob/arxiv-fixed/Redhill/Common/PrimeChain.lean#L21-L25}{\Verb|primeChain s|}, where the first prime must be strictly greater than \verb|s|:
\begin{lstlisting}
/-- `primeChain s` is the lexicographically earliest sequence of primes
with `primeChain s 0 > s` and each succeeding element more than twice the last. -/
def primeChain (s : ℕ) : ℕ → ℕ
  | 0 => Nat.find (exists_infinite_primes (s + 1))
  | n + 1 => Nat.find (exists_infinite_primes (2 * primeChain s n + 1))
\end{lstlisting}

We actually require each prime to be strictly greater than \textit{twice} the previous prime, choosing the smallest admissible prime at each step using \href{https://github.com/leanprover-community/mathlib4/blob/9e6b3aac99b624d10c84653ab9c5357283b9b3b8/Mathlib/Data/Nat/Find.lean#L67}{\Verb|Nat.find|} to keep \verb|primeChain| concrete and computable. We use this bound to prove that \href{https://github.com/Parcly-Taxel/Redhill/blob/arxiv-fixed/Redhill/Common/PrimeChain.lean#L44-L46}{a modified tuple} summing to $0$ satisfies \verb|StrongSSC|:
\begin{lstlisting}
/-- An `(n + 2)`-tuple that satisfies the strong subsum condition if `0 < m ≤ s`. -/
@[expose] def chainTup (n m s : ℕ) (i : Fin (n + 2)) : ℤ :=
  i.addCases (primeChain s ·.1) fun | 0 => m | 1 => -(m + ∑ i ∈ range n, primeChain s i)
\end{lstlisting}

In this definition \href{https://github.com/leanprover/lean4/blob/c29b6dda4f7c20e3eeaa717c4e565663c5cfa364/src/Init/Data/Fin/Lemmas.lean#L1066}{\Verb|Fin.addCases|} concatenates the first $n$ primes in the chain $(p_1,\dots,p_n)$ and the pair $(m,-(m+\sum_{i=1}^n p_i))$ to produce a tuple of length $n+2$.

\begin{lemma}[\href{https://github.com/Parcly-Taxel/Redhill/blob/arxiv-fixed/Redhill/Common/PrimeChain.lean\#L138}{\Verb|strongSSC\_chainTup|}]
    Let $m$ and $s$ be natural numbers with $0<m\le s$ and $p_1,\dots,p_n$ be primes with $s<p_1$ and $2p_{i-1}<p_i$ for $i=2,\dots,n$. Then \verb|chainTup| $(p_1,\dots,p_n,m,-(m+\sum_{i=1}^n p_i))$ satisfies \verb|StrongSSC|.
\end{lemma}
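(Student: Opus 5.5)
We will prove the lemma by induction on $n$, peeling off the largest prime at each step with \verb|strongSSC_tupReduce| and closing each step with \verb|pair_of_sum_natAbs_lt|. Write $S_k=\sum_{i=1}^k p_i$ and $T_k=(p_1,\dots,p_k,m,-(m+S_k))$, which sums to $0$.

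\textbf{Base case $n=0$.} The tuple is $(m,-m)$ with $m>0$. Take a sign weighting $b_1m-b_2m=0$. Since $m\neq0$, this forces $b_1=b_2$, so \verb|StrongSSC| holds.

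\textbf{Inductive step.} Suppose \verb|StrongSSC| holds for $T_{n-1}$; we prove it for $T_n$. In $T_n$, consider the pair $(p_n,-(m+S_n))$. Summing this pair gives $-(m+S_{n-1})$, and replacing the pair by this single element yields exactly $T_{n-1}$, up to a reordering of indices. So by \verb|strongSSC_tupReduce| it suffices to show that $(p_n,-(m+S_n))$ is a subsum block of $T_n$. For this we apply \verb|pair_of_sum_natAbs_lt|, where the remaining entries are $p_1,\dots,p_{n-1},m$. Their absolute values sum to $S_{n-1}+m$, and we need three facts.
\begin{enumerate}
\item $S_{n-1}+m<p_n$. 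From $2p_{i-1}<p_i$ one gets $S_{k}<p_k+p_k/2+\cdots<2p_k$; a quick induction gives the integer form $S_k\le 2p_k-1$, or more usefully $S_{k-1}<p_k$ with slack. Precisely, we prove by induction the bound $S_{n-1}+s<p_n$: the case $n=1$ is $s<p_1$, and the step is $S_{n}+s=(S_{n-1}+s)+p_n<2p_n<p_{n+1}$. Then $m\le s$ finishes.
\item $S_{n-1}+m<m+S_n$. This is immediate since $p_n>0$.
\item $p_n\cdot(-(m+S_n))\le0$. This holds because both $p_n$ and $m+S_n$ are positive.
\end{enumerate}

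The base case $n=0$ could alternatively be absorbed into this scheme with $s<p_1$, but the direct treatment above is simpler.

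\textbf{Main obstacle.} The mathematics is short; the hard part is the index bookkeeping. Identifying the reduced tuple with $T_{n-1}$ requires matching the \verb|Fin.addCases| layout after removing two indices, and \verb|strongSSC_tupReduce| must be stated flexibly enough to accept a permutation. To handle this, I would first prove that \verb|StrongSSC| is invariant under reindexing by an equivalence of \verb|Fin|. The reduction would then place the block at the final positions, using \verb|Fin.snoc|/\verb|Fin.lastCases|-style equalities, so that the reduced tuple matches $T_{n-1}$ by \verb|ext| and case analysis.
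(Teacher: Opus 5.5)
Your proof is correct and follows the paper's argument. Both use induction on $n$ with base case $(m,-m)$. The inductive step shows $(p_n,-(m+S_n))$ is a subsum block via \verb|pair_of_sum_natAbs_lt|, using the auxiliary induction bound $S_{n-1}+m<p_n$, then sums that block and permutes to recover the previous \verb|chainTup| before concluding with \verb|strongSSC_tupReduce|. The tentative remarks preceding your precise bound $S_{n-1}+s<p_n$ (such as $S_k<p_k+p_k/2+\cdots$) are not needed and could be dropped.
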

\begin{proof}
    We proceed by induction on $n$. At $n=0$, \verb|chainTup| is $(m,-m)$, which is easily seen to satisfy \verb|StrongSSC| because $m\ne0$.

    Now suppose the proposition is true for $n=k$. \verb|chainTup| at $n=k+1$ is then
    \begin{equation*}
        \left(p_1,\dots,p_k,p_{k+1},m,-\left(m+\sum_{i=1}^{k+1} p_i\right)\right).
    \end{equation*}
    
    The third-last and last elements of this tuple form a subsum block by \verb|pair_of_sum_natAbs_lt|, because another induction argument on the given inequalities shows that
    \begin{equation*}
        m+\sum_{i=1}^k p_i<p_{k+1}.
    \end{equation*}

    Summing the subsum block into one element and permuting gives \verb|chainTup| at $n=k$, which satisfies \verb|StrongSSC| by the inductive hypothesis. The inductive step follows by \verb|strongSSC_tupReduce|.
\end{proof}

The prime chain used in the odd case of \cite{strongN} (cf. section \ref{oddcase}) conservatively stipulates $3p_{i-1}<p_i$ rather than $2p_{i-1}<p_i$.

\subsection{VW pairs} \label{vwpair}

Recall that we want to produce tuples in \verb|factorFreeTuples F n|. By setting the seed $s$ larger than all elements of $F$, the prime chain in \verb|chainTup| will satisfy the forbidden factor condition, but the same cannot be said of the two appended entries, which may also share common factors with each other or the prime chain. Lemma 2.2 in \cite{strongN} tackles this issue by splitting the last entry $-(m+\sum_{i=1}^n p_i)$ into two numbers $v$ and $-w$ satisfying certain properties. As formalised in Redhill it reads as follows:
\begin{lemma}[\href{https://github.com/Parcly-Taxel/Redhill/blob/arxiv-fixed/Redhill/Common/VWPair.lean\#L167}{\Verb|VWPair.of|}, \href{https://github.com/Parcly-Taxel/Redhill/blob/arxiv-fixed/Redhill/Common/VWPair.lean\#L176}{\Verb|VWPair.of\_coprime|}]
    Given two natural numbers $u$ and $m$, there exist two other natural numbers $v$ and $w$ such that $w=u+v$, $m<v$, $w$ is odd and $k$ does not divide $v$ or $w$ if $3\le k\le m$. If in addition $0<u\le m$, $v$ and $w$ can be chosen to be coprime.
\end{lemma}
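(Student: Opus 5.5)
The plan is to build $v$ by the Chinese remainder theorem, prescribing its residue modulo each prime power that could contain a forbidden factor $k$, and then to add a large multiple of the combined modulus to get $m<v$. Call $k$ \emph{forbidden} if $3\le k\le m$. Every forbidden $k$ has either an odd prime factor $p\le m$ or is a power of $2$ that is at least $4$. So it suffices to arrange the following.
\begin{enumerate}
\item For every odd prime $p\le m$: $p\nmid v$ and $p\nmid w=u+v$.
\item $w$ is odd.
\item If $m\ge 4$: $4\nmid v$. Oddness of $w$ already rules out $4\mid w$.
\end{enumerate}
Under these conditions, a forbidden $k$ dividing $v$ or $w$ would hand an odd prime $p\le m$, or the number $4$, a divisibility that has been excluded.

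\textbf{Residue choices.} For each odd prime $p\le m$, choose a residue $r_p \bmod p$ avoiding both $0$ and $-u \bmod p$. This is possible since $p\ge 3$ leaves at least one of the $p$ residues free. For the prime $2$, work modulo $4$ and choose $v \bmod 4$ as follows.
\begin{itemize}
\item If $u$ is even, take $v\equiv 1 \pmod 4$. Then $v$ is odd and $w$ is odd.
\item If $u\equiv 1 \pmod 4$, take $v\equiv 2\pmod 4$. Then $w\equiv 3$.
\item If $u\equiv 3\pmod 4$, take $v\equiv 2\pmod 4$. Then $w\equiv 1$.
\end{itemize}
In each case $w$ is odd and $4\nmid v$.

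\textbf{Combining.} Let $M$ be $4$ times the product of the odd primes up to $m$. The Chinese remainder theorem gives a residue $r \bmod M$ meeting all the constraints. Set $v=r+tM$ with $t$ large enough that $v>m$, and put $w=u+v$. The three conditions above then give the first assertion. In the formalisation one could instead take $M=4\cdot m!$ and avoid talking about primes. Alternatively, one could phrase the whole construction as "$v\equiv r\pmod M$" using \verb|Nat.chineseRemainder| over the list of primes.

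\textbf{Coprimality, when $0<u\le m$.} Let $d=\gcd(v,w)=\gcd(v,u)$. Then $d\mid u$ with $u>0$, so $d\le u\le m$. Also $d\mid w$ and $w$ is odd, so $d$ is odd. If $d\ge 3$, then $d$ would be forbidden and divide $v$, which is impossible. Hence $d=1$.

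The main obstacle is the bookkeeping around the prime $2$: the statement only forbids $k\ge 3$, yet powers of $2$ such as $4$ and $8$ must still be excluded from $v$. The case split on $u \bmod 4$ above is what I would try first. I expect the Lean CRT plumbing over a variable finite set of primes to be the most laborious part of the formalisation, rather than any single mathematical step.
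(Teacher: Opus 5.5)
Your proposal is correct and follows essentially the same route as the paper. Both use a Chinese remainder construction over the odd primes up to $m$ together with the modulus $4$: avoid the residues $0$ and $-u$ at each odd prime, and use casework at $4$ to force $4\nmid v$ with $w$ odd. Both then argue coprimality the same way: a common factor of $v$ and $w$ is odd and divides $u$, so it would lie in $[3,m]$ unless it is $1$.

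The differences are cosmetic. The paper writes $v=m+1+s$, so $m<v$ is automatic, rather than adding a multiple of the modulus afterwards, and it argues with a common prime factor rather than the gcd. Your aside that $M=4\cdot m!$ would let you avoid primes is unsupported but also unnecessary, since finding a good residue still goes through the prime-by-prime choice.
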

\begin{proof}
    We choose $v=m+1+s$ and $w=m+1+u+s$, where $s$ is a natural number ``shift'' to be determined. They clearly satisfy $w=u+v$ and $m<v$, so we need only consider the non-divisibility conditions for $s$.

    Given any integer $p\ge3$, the pigeonhole principle applied to residues modulo $p$ provides a number $s_p$ with
    \begin{align*}
        m+1+s_p&\not\equiv0\bmod p\\
        m+1+u+s_p&\not\equiv0\bmod p.
    \end{align*}

    When $p=4$, simple casework provides a number $s_4$ with
    \begin{align*}
        m+1+s_4&\not\equiv0\bmod 4\\
        m+1+u+s_4&\not\equiv0\bmod\mathbf2.
    \end{align*}

    The Chinese remainder theorem (\href{https://github.com/leanprover-community/mathlib4/blob/9e6b3aac99b624d10c84653ab9c5357283b9b3b8/Mathlib/Data/Nat/ChineseRemainder.lean#L165}{\Verb|Nat.chineseRemainderOfFinset|}) then provides a number $s$ satisfying $s\equiv s_4\bmod 4$ and $s\equiv s_p\bmod p$ for all primes $3\le p\le m$. Since every number in $[3,m]$ is divisible by one of these moduli, using this $s$ satisfies the non-divisibility conditions.

    To show $v$ and $w$ are coprime when $0<u\le m$, suppose $p$ is a common prime factor. $p\ne2$ since $w$ is odd, so $p>m$ because neither $v$ nor $w$ is divisible by any number in $[3,m]$. But $p\mid u$ since $w=u+v$, so $p\le u$, contradicting $u\le m$.
\end{proof}

We record $v$, $w$ and their properties in \href{https://github.com/Parcly-Taxel/Redhill/blob/arxiv-fixed/Redhill/Common/VWPair.lean#L35-L49}{a structure} with parameters $u$ and $m$ as a reminder that the constructions in section \ref{disproof} only need the properties, not the exact values:
\begin{lstlisting}
/-- `VWPair u m` holds `v` and `w` and states that `-u, m, v, -w` satisfy the conditions
in Lemma 2.2. -/
structure VWPair (u m : ℕ) where
  /-- `v` in the paper -/
  v : ℕ
  /-- `w` in the paper, **negated** -/
  w : ℕ
  /-- `u = v + w` in the paper -/
  eq_add : w = u + v
  /-- `m < v` -/
  m_lt_v : m < v
  /-- `w` is odd -/
  w_odd : Odd w
  /-- No number in `[3,m]` divides `v` or `w` -/
  not_dvd (k) (hk : k ∈ Icc 3 m) : ¬k ∣ v ∧ ¬k ∣ w
\end{lstlisting}

These properties allow us to prove an extension of \verb|strongSSC_chainTup|.
\begin{lemma}[\href{https://github.com/Parcly-Taxel/Redhill/blob/arxiv-fixed/Redhill/Common/VWPair.lean\#L258}{\Verb|strongSSC\_vwTup|}]
    Let $m$ and $s$ be natural numbers with $0<m\le s$ and $p_1,\dots,p_n$ be primes with $s<p_1$ and $2p_{i-1}<p_i$ for $i=2,\dots,n$. Let $B$ be another natural number such that $m+\sum_{i=1}^n p_i\le B$ and let $v$ and $w$ be the corresponding fields in a \mmlean{VWPair $(m+\sum_{i=1}^n p_i)$ $B$}.

    Then \verb|vwTup| $(p_1,\dots,p_n,v,-w,m)$ satisfies \verb|StrongSSC|.
\end{lemma}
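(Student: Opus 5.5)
We reduce to \verb|strongSSC_chainTup| via \verb|strongSSC_tupReduce|. Write $u=m+\sum_{i=1}^n p_i$, so the \verb|VWPair| $u$ $B$ gives $w=u+v$, $B<v$ and $u\le B$. The tuple $(p_1,\dots,p_n,v,-w,m)$ therefore sums to $0$. If we merge the pair $(v,-w)$ into one entry, that entry is $v-w=-u$. After permuting $m$ before it, the reduced tuple is exactly \verb|chainTup| $(p_1,\dots,p_n,m,-(m+\sum p_i))$. That tuple satisfies \verb|StrongSSC| by \verb|strongSSC_chainTup|, under the same hypotheses $0<m\le s$, $s<p_1$ and $2p_{i-1}<p_i$. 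So by \verb|strongSSC_tupReduce| it suffices to show that $(v,-w)$ is a subsum block of the full tuple.

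For the subsum block we apply \verb|pair_of_sum_natAbs_lt| with $x=v$ and $y=-w$. The remaining entries are $p_1,\dots,p_n,m$, all non-negative, so the sum of their absolute values is $m+\sum p_i=u$. We need three facts.
\begin{itemize}
\item $u<|v|=v$ holds because $u\le B<v$.
\item $u<|{-w}|=w$ holds because $w=u+v$ and $v>B\ge0$.
\item $xy=-vw\le0$ holds because $v,w\ge0$.
\end{itemize}
The corollary then gives that $(v,-w)$ is a subsum block. Note that the oddness and non-divisibility fields of the \verb|VWPair| are not needed here; they serve the forbidden-factor and coprimality conditions elsewhere.

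The main obstacle is bookkeeping rather than mathematics. The order of entries in \verb|vwTup| (with $m$ last) must be related to the block form $(a_1,\dots,a_n,x,y)$ expected by \verb|pair_of_sum_natAbs_lt|. The reduced tuple must also be matched with \verb|chainTup| up to a permutation of indices. I would first prove that \verb|StrongSSC| is invariant under reindexing by a permutation of \verb|Fin|. I would then state the subsum-block lemma for an arbitrary pair of indices $i\ne j$, so that no literal concatenation is required. Finally, I would check by \verb|ext| and casework on \verb|Fin.addCases| that the merged tuple equals \verb|chainTup| after the permutation, using \verb|omega|/\verb|lia| for the integer identity $v-w=-(m+\sum p_i)$.
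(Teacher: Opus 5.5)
Your proposal is correct and follows essentially the same route as the paper. You show $(v,-w)$ is a subsum block via \texttt{pair\_of\_sum\_natAbs\_lt} using $u\le B<v$ and $w=u+v$, merge the block into $-u$, permute to get \texttt{chainTup}, and finish with \texttt{strongSSC\_tupReduce} and \texttt{strongSSC\_chainTup}; the only cosmetic difference is that the paper records $0<u$ to get $v\cdot(-w)<0$ strictly, whereas you rightly note that $xy\le 0$ needs only $v,w\ge 0$.
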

\begin{proof}
    $v$ and $-w$ form a subsum block by \verb|pair_of_sum_natAbs_lt|. It is clear that $m+\sum_{i=1}^n p_i>0$; expanding the properties of $v$ and $w$ gives
    \begin{align*}
        \mathtt{eq\_add}&:\ w=m+\sum_{i=1}^n p_i+v\\
        \mathtt{m\_lt\_v}&:\ B<v
    \end{align*}
    so $0<m+\sum_{i=1}^n p_i<v<w$, the strict inequality conditions in $\verb|pair_of_sum_natAbs_lt|$ are met and $v\cdot-w<0$. Summing the subsum block into one element and permuting gives \verb|chainTup|, so \verb|vwTup| satisfies \verb|StrongSSC| by \verb|strongSSC_tupReduce| and \verb|strongSSC_chainTup|.
\end{proof}

The statement of Lemma 2.2 in \cite{strongN} unnecessarily negates $u$ and $w$; it is more parsimonious to let $u$ be a natural number and use $-u$ than to let $u$ be a non-positive integer and use $u$. It also has two unused conditions, $2\le m$ and $w\le(m+1)q$ in Redhill's notation where $q$ is the product of all primes at most $p$, and a tighter lower bound than necessary of $q<v$. Its proof proceeds through an imperative algorithm that considers each relevant modulus in turn -- a less efficient approach compared to the Chinese remainder theorem.

\section{The disproof} \label{disproof}

From \verb|vwTup|, the final step to obtain the constructions in \cite{strongN} splits $m$ into a few univariate polynomials whose radicals are of significantly lesser degree, following a result of Konyagin \cite{brow00}. Each tuple in a constructed infinite sequence (lying in \verb|factorFreeTuples F n|) thus consists of three parts:
\begin{itemize}
    \item A \textbf{constant} (depending only on $n$ and $F$) prime chain (section \ref{primechain}) whose length varies with $n$
    \item $v$ and $-w$ from section \ref{vwpair}, which are also \textbf{constant}
    \item A \textbf{varying} polynomial head whose evaluation at different points generates the sequence
\end{itemize}

The evaluation points are chosen to further reduce the degree of a polynomial upper bound on $\operatorname{rad}(\prod_i a_i)$ in the sequence-generating variable to below the degree of $\max_i|a_i|$ in the same variable. If the two degrees are $d$ and $c$ respectively with $d<c$, applying equation \ref{eq:quality} shows that the tuple qualities tend to $c/d>1$, which is therefore a lower bound on the quality of \verb|factorFreeTuples F n|. The disproof of Ramaekers's conjecture for the chosen $n$ follows since \verb|factorFreeTuples F n| is a subset of \verb|ramaekersTuples n|.

\subsection{Odd case} \label{oddcase}

\href{https://github.com/Parcly-Taxel/Redhill/blob/arxiv-fixed/Redhill/Odd/Defs.lean#L42-L50}{This construction} only works for tuples of length $n+5$ with $n$ \textbf{even}\footnote{In this subsection and the next $n$ is the length of the \textit{prime chain} $p_1,\dots,p_n$, not the tuple's total length.} and $5,10\not\in F$ (recall that all forbidden factors, which are collected in $F$, are at least $3$). It uses the identity
\begin{equation*}
    8=(x-1)^5+10(x^2+1)^2-(x+1)^5,
\end{equation*}
so we call \verb|VWPair.of| with $u=8+\sum_{i=1}^n p_i$ and $m=\max(\{u\}\cup F)$. The prime chain's first member $p_1$ is greater than $\max(\{8\}\cup F)$.
\begin{lstlisting}
/-- The sum of `tup` over all indices save `n` and `n + 1`,
i.e. the input `u` to `VWPair`. -/
def U : ℕ := 8 + ∑ i ∈ range n, primeChain (max 8 (F.sup id)) i

/-- The `VWPair` generated from the inputs `u = U n F, m = max (U n F) (F.sup id)`. -/
def VW : VWPair (U n F) (max (U n F) (F.sup id)) := .of ..

/-- The sequence of `(n + 5)`-tuples containing an infinite subsequence
in `factorFreeTuples` whose qualities tend to `5 / 3`,
assuming `n` is even and `0, 1, 2, 5, 10 ∉ F`. -/
def tup (x : ℤ) (i : Fin (n + 5)) : ℤ :=
  i.addCases (primeChain (max 8 (F.sup id)) ·.1) fun
    | 0 => (VW n F).v
    | 1 => -(VW n F).w
    | 2 => (x - 1) ^ 5
    | 3 => 10 * (x ^ 2 + 1) ^ 2
    | 4 => -(x + 1) ^ 5
\end{lstlisting}

With the properties of prime chains and \verb|VWPair|, we can easily show that \verb|tup|'s entries sum to $0$, as well as \textit{a priori} lower bounds
\begin{equation*}
    11\le p_i\qquad8\le u\qquad 9\le v\qquad 17\le w.
\end{equation*}

We now require $x$ to be a multiple of
\begin{equation*}
    y=10\Biggr(\prod_{f\in F}f\Biggr)\left(\prod_{i=1}^n p_i\right)\cdot v\cdot w\ge1530.
\end{equation*}
\begin{lstlisting}
/-- We require `x` in `tup` to be a multiple of this number,
an optimised version of the paper's `y`. -/
def Y : ℕ :=
  10 * (F.erase 0).prod id *
  (∏ i ∈ range n, primeChain (max 8 (F.sup id)) i) * (VW n F).v * (VW n F).w
\end{lstlisting}

With this constraint, \verb|tup| has pairwise coprime entries and satisfies the forbidden factor condition:
\begin{itemize}
    \item Since $10$, all $p_i$, $v$ and $w$ divide $x$, $(x-1)^5$ and $-(x+1)^5$ are coprime with every other element of \verb|tup| and with each other.
    \item Since $11\le p_i$ and all $p_i$ divide $x$, $10(x^2+1)^2$ is coprime to all the $p_i$. Since $5\nmid v,w$ but $v,w\mid x$, $10(x^2+1)^2$ can have a common factor of at most $2$ with $v$ and $w$. But $w$ is odd by construction and \textbf{$n$ being even makes $v$ odd}\footnote{$u$ is the sum of $8$ and an even number of odd primes, so is even. $w=u+v$, so $v$ must be odd.}, so $10(x^2+1)^2$ is coprime with every other element of \verb|tup|. This is the only place in the odd case where the evenness of $n$ is used.
    \item \verb|VWPair.of_coprime| shows that $v$ and $-w$ are coprime. They are not divisible by any number in $[3,m]$ and $11\le p_i<u\le m$, so they are also coprime with all $p_i$. The $p_i$ are obviously pairwise coprime among themselves, completing the proof that \verb|tup|'s entries are pairwise coprime.
    \item For any factor $f\in F$ we have $f<p_i$, $3\le f\le m$ and $f\mid x$. These show that all $p_i$, the pair $(v,-w)$ and the polynomial head respectively avoid factors in $F$. We need $5,10\not\in F$ because of the entry $10(x^2+1)^2$.
\end{itemize}

We prove one more lemma before selecting evaluation points for the polynomial head.
\begin{lemma}[\href{https://github.com/Parcly-Taxel/Redhill/blob/arxiv-fixed/Redhill/Odd/Subsum.lean\#L215}{\Verb|OddCase.strongSSC\_tup|}]
    If $|x|\ge y$, \verb|tup| satisfies \verb|StrongSSC|.
\end{lemma}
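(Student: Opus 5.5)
The plan is to peel the polynomial head off with \verb|of_sum_natAbs_lt| and then reduce to the already established \verb|strongSSC_vwTup|. Write the tuple as the constant part $(p_1,\dots,p_n,v,-w)$ followed by the head $(c_1,c_2,c_3)=((x-1)^5,10(x^2+1)^2,-(x+1)^5)$. First I show that the head is a subsum block. By \verb|of_sum_natAbs_lt|, it suffices to check that for every non-constant sign vector $(b_1,b_2,b_3)\in\{-1,0,1\}^3$,
\begin{equation*}
  S:=\sum_{i=1}^n p_i+v+w<\left|b_1c_1+b_2c_2+b_3c_3\right|.
\end{equation*}
By the symmetry $b\mapsto -b$ only 13 of the 26 sign vectors need checking. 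Each signed combination is a polynomial in $x$ with nonzero leading part. For example, $c_1+c_3=-10x^4-20x^2-2$, $c_1-c_3=2x^5+20x^3+10x$, and $c_1+c_2=x^5+5x^4+\dots$ (the lower terms of $x^5$ cancel partially). The combinations $c_1+c_2+c_3=8$ and its negative are excluded because they are constant vectors.

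The main obstacle is bounding $S$ uniformly by something like $|x|$ or $|x|^2$, so that every non-constant combination (degree at least $4$ in $x$, with leading coefficient at least $1$ in absolute value) dominates. Here $|x|\ge y$ is used. Since $y$ is a product containing each $p_i$, $v$ and $w$, all at least $9$, and the factor $10$, we get $S\le y\le|x|$ crudely: the sum of numbers at least $2$ is at most their product. I would then prove $|x|<|b\cdot c|$ for each non-constant $b$. This is a one-variable polynomial inequality valid for $|x|\ge 1530$. I would handle it case by case with \verb|nlinarith|/\verb|lia|-style reasoning after substituting $t=|x|$ and splitting on the sign of $x$. The cases with $b_2\neq0$ and $b_1=-b_3$ give degree-5 expressions such as $2x^5\pm10(x^2+1)^2+\dots$. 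The cases with $b_1=b_3$ give degree-4 expressions such as $-10x^4-20x^2-2\pm10(x^2+1)^2$. Here care is needed: $c_1+c_3+c_2=8$ is excluded, but $c_1+c_3-c_2=-20x^4-40x^2-12$ is fine. The single-entry cases are trivial.

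Once the head is a subsum block, I sum it into a single entry, which equals $8$, and permute. This gives exactly \verb|vwTup| $(p_1,\dots,p_n,v,-w,8)$ with $m=8$, seed $s=\max(\{8\}\cup F)\ge 8$, and $B=\max(\{u\}\cup F)\ge u=8+\sum p_i$. The hypotheses of \verb|strongSSC_vwTup| then hold: $0<8\le s$, the prime chain satisfies $s<p_1$ and $2p_{i-1}<p_i$, and $m+\sum p_i\le B$. So the reduced tuple satisfies \verb|StrongSSC|, and \verb|strongSSC_tupReduce| lifts this to \verb|tup|.
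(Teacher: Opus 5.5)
Your proposal is correct and follows the paper's proof. Both show that the polynomial head is a subsum block via \verb|of_sum_natAbs_lt|, using $\sum_i p_i+v+w<y\le|x|$, then collapse the head to the single entry $8$ to get the \verb|vwTup| instance with $m=8$, and finish with \verb|strongSSC_vwTup| and \verb|strongSSC_tupReduce|. The only difference is cosmetic: the paper bounds every non-constant signed combination below by $y$ using the estimates $21|x|^4\le|(x\mp1)^5|$ and $|10(x^2+1)^2|\le20|x|^4$, whereas you bound each non-constant case below by $|x|$ directly with one-variable polynomial inequalities.
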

\begin{proof}
    Knowing \textit{a priori} that $1530\le y$, we prove these estimates for the polynomial head's entries:
    \begin{align*}
        21|x|^4\le\,&|(x-1)^5|\\
        &|10(x^2+1)^2|\le20|x|^4\\
        21|x|^4\le\,&|-(x+1)^5|
    \end{align*}

    Then we show that for $b_1,b_2,b_3\in\{-1,0,1\}$ not all equal,
    \begin{equation*}
        y\le|b_1\cdot(x-1)^5+b_2\cdot10(x^2+1)^2+b_3\cdot-(x+1)^5|.
    \end{equation*}

    The sum of magnitudes of \verb|tup|'s remaining entries, $\sum_{i=1}^n p_i+v+w$, is strictly less than $y$, so the polynomial head's entries form a subsum block by \verb|of_sum_natAbs_lt|. Summing this block into one element gives an instance of \verb|vwTup| satisfying \verb|StrongSSC|; the result follows by \verb|strongSSC_tupReduce|.
\end{proof}

We now know that if $x$ is a non-zero multiple of $y$, \verb|tup| lies in \verb|factorFreeTuples F (n + 5)|. To obtain suitable evaluation points we consider the Pell-type equation $(ys)^2+1=(y^2+1)t^2$, which has \href{https://github.com/Parcly-Taxel/Redhill/blob/arxiv-fixed/Redhill/Odd/Pell.lean#L20}{infinitely many solutions} given by
\begin{align*}
    (s_0,t_0)&=(1,1)\\
    (s_{k+1},t_{k+1})&=((2y^2+1)s_k+(2y^2+2)t_k,2y^2s_k+(2y^2+1)t_k),
\end{align*}
and generate the sequence of tuples as $x_k=s_ky$. Note that $0<t_k\le s_k\le x_k$.

The radical of the product of \verb|tup|'s entries using $x_k$ is bounded as
\begin{align*}
    \operatorname{rad}\Biggr(\Biggr(\prod_{i=1}^n p_i\Biggr)\cdot v\cdot w\cdot(x_k-1)^5\cdot10(x_k^2+1)^2\cdot(x_k+1)^5\Biggr)&\le C\cdot(x_k-1)\cdot\operatorname{rad}(x_k^2+1)\cdot(x_k+1)\\
    &=C\cdot(x_k-1)\cdot\operatorname{rad}((y^2+1)t_k^2)\cdot(x_k+1)\\
    &\le C'\cdot(x_k-1)\cdot t_k\cdot(x_k+1)\\
    &\le C'\cdot(x_k-1)\cdot x_k\cdot(x_k+1)
\end{align*}
where $C$ and $C'$ are constants, while the maximum absolute value is $(x_k+1)^5$. Applying the argument at the beginning of section \ref{disproof} leads to Theorem 1.13 in \cite{strongN}:
\begin{theorem}[\href{https://github.com/Parcly-Taxel/Redhill/blob/arxiv-fixed/Redhill/Odd/Main.lean\#L155}{\Verb|quality\_factorFreeTuples\_ge\_of\_odd\_of\_disjoint|}]
    For all odd $n\ge5$ and all finsets $F$ of natural numbers not containing $0$, $1$, $2$, $5$ or $10$, \verb|factorFreeTuples F n| has quality at least $5/3$. Ramaekers's conjecture is thus false for these values of $n$.
\end{theorem}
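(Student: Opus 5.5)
Write the odd length as $n+5$ with $n$ even; since the total length is at least $5$, we have $n\ge0$. The plan is to exhibit an infinite subset of \verb|factorFreeTuples F (n + 5)| whose tuple qualities tend to $5/3$. Then we unfold \verb|quality| as an infimum and show every $q<5/3$ fails to be an admissible bound.

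Concretely, take the sequence $a_k=\mathtt{tup}(x_k)$ with $x_k=s_ky$ from the Pell recursion. Because $x_k$ is a non-zero multiple of $y$ and $|x_k|\ge y$, the coprimality and forbidden-factor bullet points above apply. Together with \verb|OddCase.strongSSC_tup| and the zero-sum property, they place every $a_k$ in \verb|factorFreeTuples F (n + 5)|. Since $s_k$ is strictly increasing (each step multiplies by at least $2y^2+1$), the $x_k$ are distinct. Hence $k\mapsto a_k$ is injective, and its range is an infinite subset.

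Next comes the quality estimate. We have $\mathrm{maxAbs}(a_k)=(x_k+1)^5$, since this entry dominates all others once $x_k\ge1530$. By the radical chain displayed above, $\operatorname{rad}(\prod a_k)\le C'(x_k-1)x_k(x_k+1)\le C'(x_k+1)^3$. Here $C'=10\cdot\operatorname{rad}(y^2+1)\cdot\prod p_i\cdot v\cdot w$ absorbs the constant entries. Deriving this requires multiplicativity of $\operatorname{rad}$ up to divisibility ($\operatorname{rad}(ab)\le\operatorname{rad}(a)\operatorname{rad}(b)$), $\operatorname{rad}(z^5)=\operatorname{rad}(z)\le|z|$, and the Pell identity $x_k^2+1=(y^2+1)t_k^2$. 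Hence
\[
\mathtt{tupleQuality}(a_k)\ge\frac{5\log(x_k+1)}{\log C'+3\log(x_k+1)},
\]
valid once the denominator exceeds $1$. The right-hand side tends to $5/3$ as $x_k\to\infty$.

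To finish, fix $q$ with $\{a\in A\mid q<\mathtt{tupleQuality}(a)\}$ finite, where $A$ is \verb|factorFreeTuples F (n + 5)|. We must show $q\ge5/3$. Suppose $q<5/3$; we may assume $q$ is finite, since otherwise there is nothing to prove. Then for all large $k$ the lower bound exceeds $q$, giving infinitely many $a_k$ in the finite set, a contradiction. So $5/3\le\mathtt{quality}$ via \verb|le_sInf|.

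Ramaekers's conjecture fails because \verb|factorFreeTuples| is contained in \verb|ramaekersTuples|. Indeed, \verb|StrongSSC| implies \verb|SSC|: take $b$ to be the indicator of the subset. Quality is antitone under inclusion of sets, since a subset has fewer high-quality tuples. Therefore the Ramaekers set has quality at least $5/3\ne1$.

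I expect the main obstacle to be the radical estimate with explicit constants, followed by the limit argument in \verb|ENNReal|. My first attempt would be to avoid limits entirely. For a given rational $q<5/3$, I would solve $5L/(\log C'+3L)>q$ for a threshold on $L=\log(x_k+1)$ by linear rearrangement. This requires only monotonicity of $\log$ and the unboundedness of $x_k$, proved from $s_k\ge k$ by induction.
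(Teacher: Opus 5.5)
Your proposal is correct and follows essentially the paper's route: membership of $\mathtt{tup}(s_ky)$ via the coprimality, forbidden-factor and \verb|OddCase.strongSSC_tup| arguments, then the Pell-based degree-$3$ radical bound against $\mathrm{maxAbs}=(x_k+1)^5$, then the infimum argument and inclusion in \verb|ramaekersTuples|, only spelled out in more detail. Two small slips: quality is \emph{monotone}, not antitone, under inclusion ($A\subseteq B$ gives $\mathtt{quality}(A)\le\mathtt{quality}(B)$, which is the direction you actually use), and because Lean sets $z/0=0$ your lower bound on \verb|tupleQuality| needs $\operatorname{rad}(\prod_i a_{k,i})>1$ (true since $|x_k-1|\ge2$) rather than a condition on your comparison denominator.
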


At the time of writing Redhill does not have a formalisation of Theorem 2.3 in \cite{strongN}, which compared to Theorem 1.13 allows $5$ and $10$ in $F$ at the cost of a weaker lower bound on the quality. It is not needed to disprove Ramaekers's conjecture.

The proof of Theorem 1.13 in \cite{strongN} uses $3(8+\max(F))<p_1$ instead of $\max(\{8\}\cup F)<p_1$ (where $\max(F)=0$ if $F$ is empty) and $y=w!$ instead of the product of necessary factors that we found easier to understand and manipulate. Its arguments for the constructed tuples satisfying \verb|StrongSSC| are serpentine and highly non-rigorous -- subsum blocks were introduced in part to tidy these arguments for formalisation in Lean.

\subsection{General case} \label{generalcase}

\href{https://github.com/Parcly-Taxel/Redhill/blob/arxiv-fixed/Redhill/General/Defs.lean#L57-L66}{This construction} works for tuples of length $n+6$ and any $F$ whose elements are all at least $3$. Define
\begin{equation*}
    y=33330\prod_{f\in F}f\qquad x=(y+1)^{h!}
\end{equation*}
where varying $h$ generates the sequence of tuples. We use the identity
\begin{equation*}
    (100y-2)y^5 = (x^2+10y^3)^2 + (10y-1)x^4 + (x-y)^5 - (x+y)^5
\end{equation*}
and call \verb|VWPair.of| with $u=m=(100y-2)y^5+\sum_{i=1}^n p_i$. The prime chain's first member $p_1$ is greater than $100y^6$.
\begin{lstlisting}
/-- An optimised version of the paper's `y`. -/
def Y : ℕ := 33330 * (F.erase 0).prod id

/-- `x` in the paper, but using the optimised `y`. -/
def X : ℕ := (Y F + 1) ^ h !

/-- The sum of `tup` over all indices save `n` and `n + 1`,
i.e. the input `u` to `VWPair`. -/
def U : ℕ := (100 * Y F - 2) * Y F ^ 5 + ∑ i ∈ range n, primeChain (100 * Y F ^ 6) i

/-- The `VWPair` generated from the inputs `u = m = U n F`. -/
def VW : VWPair (U n F) (U n F) := .of ..

/-- The sequence of `(n + 6)`-tuples whose tail is in `factorFreeTuples`
and has quality tending to `5 / 4`. -/
def tup (i : Fin (n + 6)) : ℤ :=
  i.addCases (primeChain (100 * Y F ^ 6) ·.1) fun
    | 0 => (VW n F).v
    | 1 => -(VW n F).w
    | 2 => (X F h ^ 2 + 10 * Y F ^ 3) ^ 2
    | 3 => (10 * Y F - 1) * X F h ^ 4
    | 4 => (X F h - Y F) ^ 5
    | 5 => -(X F h + Y F) ^ 5
\end{lstlisting}

\newpage

Like the odd case, we deduce from the properties of prime chains and \verb|VWPair| that this \verb|tup| (\verb|GeneralCase.tup| in full, to disambiguate from \verb|OddCase.tup|) has entries summing to $0$, as well as the \textit{a priori} bound $33330\le y$. Unlike the odd case where all tuples in the construction are fully defined, we only show that \verb|tup| is in \verb|factorFreeTuples F (n + 6)| if $h$ is sufficiently large. The canonical way to formalise this last notion is \mmlean{∀ᶠ h in atTop} \cite{mil-filters}, and the \href{https://github.com/leanprover-community/mathlib4/blob/95142c9c4fdb4ff9632553901cfc82d5c84a8ad4/Mathlib/Order/Filter/Defs.lean#L544-L545}{\Verb|filter\_upwards|} tactic is useful in manipulating hypotheses using the notion.

We first prove that \verb|tup| has pairwise coprime entries for sufficiently large $h$. Note that $y\pm1$ is coprime to $10y\pm1$ regardless of sign choices because $3,11\mid y$. By Euler's theorem there exists a least positive exponent $k$ for each sign choice such that $(y\pm1)^k\equiv1\bmod(10y\pm1)$, so if $h$ is greater than all these exponents -- \textit{sufficiently large} -- $(y\pm1)^{h!}\equiv1\bmod(10y\pm1)$ for all sign choices. In particular, $x\equiv1\bmod(10y\pm1)$.

The same arguments as in the odd case show that all pairs between entries outside the polynomial head (the $p_i$, $v$ and $-w$) are coprime. The remaining pairs are handled as follows:
\begin{itemize}
    \item \textit{An entry in the polynomial head and an entry outside it.} Because $y$ is even, $x$ and all entries of \verb|tup| except possibly $v$ are odd, so we need only rule out common \textit{odd prime} factors. By construction, all such factors of entries outside the polynomial head are greater than $(100y-2)y^5$.

    Let $p$ be one such factor. Since $p>(100y-2)y^5>y+1$, Fermat's little theorem implies that $x=(y+1)^{h!}\equiv1\bmod p$ for sufficiently large $h$, so
    \begin{align*}
        x^2+10y^3\equiv1+10y^3&\not\equiv 0\bmod p\\
        10y-1&\not\equiv 0\bmod p\\
        x-y\equiv1-y&\not\equiv0\bmod p\\
        x+y\equiv1+y&\not\equiv0\bmod p
    \end{align*}
    where the non-congruences arise because $p$ is strictly greater than all residues. Therefore, $p$ cannot be a factor of any entry in the polynomial head.
    \item \textit{$(x-y)^5$ and $-(x+y)^5$}. This reduces to showing $\gcd(x-y,x+y)=\gcd(2y,x+y)=1$. Since $x+y$ is odd this reduces further to $\gcd(x,y)=1$, which is true by unfolding $x$'s definition. This is the only pair out of the pairs within the polynomial head that is coprime for all $h$, not just sufficiently large $h$.
    \item \textit{$(10y-1)x^4$ and $(x-y)^5$/$-(x+y)^5$.} We have just shown that $\gcd(x,y)=1$, so $\gcd(x,x\pm y)=1$. For sufficiently large $h$ we know that $x\equiv1\bmod(10y-1)$, so $x\pm y\equiv1\pm y\bmod(10y-1)$. We also know that $10y-1$ is coprime to $y\pm1$, so $10y-1$ is coprime to $x\pm y$ too.
    \item \textit{$(x^2+10y^3)^2$ and $(x-y)^5$/$-(x+y)^5$.} It suffices to show that $x^2+10y^3$ is coprime to $(x+y)(x-y)=x^2-y^2$, which by subtraction reduces to $\gcd((10y+1)y^2,x^2-y^2)=1$. The $y^2$ factor is handled as before using $\gcd(y,x\pm y)=\gcd(x,y)=1$. A very similar argument to the previous bullet point, using $x\equiv1\bmod(10y+1)$ for sufficiently large $h$, handles the $10y+1$ factor.
    \item \textit{$(x^2+10y^3)^2$ and $(10y-1)x^4$.} We need to show that $\gcd(x^2+10y^3,x)=\gcd(x^2+10y^3,10y-1)=1$. Reducing the first $\gcd$ gives $\gcd(x^2+10y^3,x)=\gcd(10y^3,x)$, which is $1$ because $\gcd(x,y)=1$ and $\gcd(x,10)=1$ from $10\mid y$. To show that the second $\gcd$ is $1$, we square the congruence $x\equiv1\bmod(10y-1)$ that holds for sufficiently large $h$ and obtain an integer $M$ with $x^2-1=(10y-1)M$. Then
    \begin{align*}
        \gcd(x^2+10y^3,10y-1)&=\gcd((10y-1)M+1+10y^3,10y-1)=\gcd(10y^3+1,10y-1)\\
        &=\gcd(y^2+1+(10y-1)y^2,10y-1)\\
        &=\gcd(y^2+1+(10y-1)(10y+1),10y-1)\\
        &=\gcd(101y^2,10y-1).
    \end{align*}

    It is almost trivial to show that $\gcd(y,10y-1)=1$, while $\gcd(101,10y-1)=1$ because $101\mid y$. This completes the proof of \verb|tup| having pairwise coprime entries for sufficiently large $h$.
\end{itemize}

Proving the forbidden factors condition is simple. For any $f\in F$, $f\mid y$ and hence $x\equiv1\bmod f$, which in turn leads to all entries in the polynomial head being $\pm1\not\equiv0\bmod f$. Since
\begin{align*}
    3\le f<y&<(100y-2)y^5\le m<v<w\\
    y&<100y^6<p_1
\end{align*}
and $v$ and $w$ are not divisible by any number in $[3,m]$, all entries outside the polynomial head are not divisible by $f$ either.

Finally, we prove that \verb|tup| satisfies \verb|StrongSSC|. The entries outside the polynomial head are constant, so $x$ exceeds the sum of their magnitudes for sufficiently large $h$. The remainder of this sub-proof is almost identical to what we did in the odd case: show that the polynomial head's entries form a subsum block by \verb|of_sum_natAbs_lt|, sum that block to obtain an instance of \verb|vwTup| satisfying \verb|StrongSSC| and conclude with \verb|strongSSC_tupReduce|. Here we use these estimates:
\begin{align*}
    &|(x^2+10y^3)^2|\le4x^4\\
    12yx^4\le&|(x-y)^5|\\
    12yx^4\le&|-(x+y)^5|
\end{align*}

We are now ready to prove Theorem 1.14 in \cite{strongN}.
\begin{theorem}[\href{https://github.com/Parcly-Taxel/Redhill/blob/arxiv-fixed/Redhill/General/Main.lean\#L110}{\Verb|quality\_factorFreeTuples\_ge|}]
    For all $n\ge6$ and all finsets $F$ of natural numbers not containing $0$, $1$ or $2$, \verb|factorFreeTuples F n| has quality at least $5/4$. Ramaekers's conjecture is thus false for these values of $n$.
\end{theorem}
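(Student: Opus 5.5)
We take the length-$n$ tuples as $n' + 6$ with $n' = n-6\ge0$ the length of the prime chain. We then assemble the facts already established for \verb|GeneralCase.tup| into a statement about \verb|quality|, using the argument at the beginning of section \ref{disproof}.

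First, combine the three ``eventually in $h$'' results into a single statement, \mmlean{∀ᶠ h in atTop, tup n' F h ∈ factorFreeTuples F (n' + 6)}, using \verb|filter_upwards|. The three results are: pairwise coprimality for large $h$, the forbidden factor condition, and \verb|StrongSSC| for large $h$ (the latter via \verb|of_sum_natAbs_lt|, \verb|strongSSC_tupReduce| and \verb|strongSSC_vwTup|). Vanishing of the sum holds for all $h$ by the polynomial identity. Since $x=(y+1)^{h!}$ is strictly increasing in $h$, these tuples are pairwise distinct, so infinitely many distinct tuples lie in the set.

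Next, reduce the quality bound to a statement about individual tuples. By the definition of \verb|quality| as an \verb|sInf|, it suffices to show: for every $q<5/4$ the set $\{a\in A\mid q<\mathtt{tupleQuality}\,a\}$ is infinite. For this it is enough that $q<\mathtt{tupleQuality}(\mathtt{tup}\,h)$ for all large $h$, which by injectivity of $h\mapsto\mathtt{tup}\,h$ yields an infinite subset. Any $q$ in the defining set must then satisfy $q\ge5/4$, giving $\mathtt{quality}\ge5/4$. I would isolate this step as a general lemma: if an injective sequence in $A$ eventually lies in $A$ and its tuple qualities have $\liminf\ge c$, then $\mathtt{quality}\,A\ge c$. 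The odd case Theorem presumably uses the same lemma.

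The main obstacle is the asymptotic estimate for $\mathtt{tupleQuality}(\mathtt{tup}\,h)$.

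1. The maximum absolute value is $(x+y)^5\ge x^5$, using the estimates $12yx^4\le|(x\pm y)^5|$ and $|(x^2+10y^3)^2|\le4x^4$.
2. The radical satisfies $\operatorname{rad}\le C\cdot(x^2+10y^3)\cdot x\cdot(x-y)(x+y)\le C''x^4$, where $C$ absorbs $\prod p_i$, $v$, $w$ and $10y-1$, and we use $\operatorname{rad}(a^k)\le|a|$ and submultiplicativity of $\operatorname{rad}$. Crucially, $\operatorname{rad}(x^4)=\operatorname{rad}(x)\le y+1$ since $x=(y+1)^{h!}$, so the radical is in fact at most $C'''x^4$ with degree $4$ and the $x^4$ factor contributes only a constant. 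The degrees are then $c=5$ and $d=4$.
3. Hence
\[
\frac{\log\max}{\log\operatorname{rad}}\ge\frac{5\log x}{4\log x+\log C'''}\to\frac54,
\]
and the ratio eventually exceeds any $q<5/4$, since $\log x\to\infty$.

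The fiddly part in Lean is handling \verb|ENNReal.ofReal| together with real logarithms of integer casts. It also requires verifying $\operatorname{rad}>1$ so that the denominator is positive, which holds since $p_1\mid$ the product.

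Finally, for Ramaekers's conjecture, \verb|factorFreeTuples F n| $\subseteq$ \verb|ramaekersTuples n|, because \verb|StrongSSC| implies \verb|SSC| by restricting to $b_i\in\{0,1\}$. Moreover, \verb|quality| is monotone under inclusion, since a finite superset-filter gives a finite subset-filter. So $\mathtt{quality}(\mathtt{ramaekersTuples}\,n)\ge5/4>1$, and \verb|RamaekersConjecture n| fails.
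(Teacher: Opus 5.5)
Your proposal is correct and follows essentially the same route as the paper: eventual membership of \verb|tup| in \verb|factorFreeTuples F (n + 6)|, then a degree-$4$ upper bound on the radical via $\operatorname{rad}(x)=\operatorname{rad}(y+1)$ against the degree-$5$ entry $(x+y)^5$, then the limit argument from the start of Section~\ref{disproof} and the inclusion into \verb|ramaekersTuples n|. There are two small slips to tidy: the intermediate bound $C(x^2+10y^3)\,x\,(x-y)(x+y)\le C''x^4$ is false as written (it has degree $5$ until you replace $x$ by $\operatorname{rad}(x)\le y+1$, as you then do), and when $n=6$ the prime chain is empty, so justify $\operatorname{rad}>1$ using, e.g., $w>1$ rather than $p_1$.
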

\begin{proof}
    Recall that entries outside the polynomial head are constant, so they only contribute a constant factor $C$ to the upper bound on the radical of the product of \verb|tup|'s entries:
    \begin{align*}
        \operatorname{rad}(C\cdot(x^2+10y^3)^2\cdot(10y-1)x^4\cdot(x-y)^5\cdot(x+y)^5)&\le C'\cdot(x^2+10y^3)\cdot\operatorname{rad}(x)\cdot(x-y)\cdot(x+y)\\
        &=C'\cdot(x^2+10y^3)\cdot\operatorname{rad}((y+1)^{h!})\cdot(x^2-y^2)\\
        &=C'\cdot(x^2+10y^3)\cdot\operatorname{rad}(y+1)\cdot(x^2-y^2)\\
        &=C''\cdot(x^2+10y^3)\cdot(x^2-y^2)
\end{align*}
where $C'$ and $C''$ are constants. This is an upper bound of degree $4$, while \verb|tup|'s maximum absolute value is $(x+y)^5$ of degree $5$. Applying the argument at the beginning of section \ref{disproof} gives the result.
\end{proof}

The proof of Theorem 1.14 in \cite{strongN} confuses itself by negating the entire tuple (``Let $a_7,a_8,\dots,a_n$ be negative odd prime numbers\textellipsis'') and introducing two integer variables $\ell\ge11$ and $t>101$, then setting $y=\ell!\cdot t$ and assuming $F=\{3,4,\dots,\ell\}$ ``[as] enlarging $F$ only makes the statement harder to prove''. There is no need to enlarge $F$, since $x$'s construction allows proving the forbidden factors condition as long as $\prod_{f\in F} f\mid y$. The choice of $y$ also leads to an error in the proof of $(x^2+10y^3)^2$ and $(10y-1)x^4$ being coprime: ``As $101$ is prime and $10y-1>101$, they have no common factor.'' (Take $\ell=11$ and $t=190$.) Including an explicit factor of $33330=3\cdot10\cdot11\cdot101$ in $y$ corrects this error.

\subsection{Miscellaneous results}

Besides the two major theorems from \cite{strongN} disproving Ramaekers's conjecture for all $n\ge5$, three other results are formalised in Redhill:
\begin{itemize}
    \item \href{https://github.com/Parcly-Taxel/Redhill/blob/arxiv-fixed/Redhill/KonyaginPrelude.lean}{Theorem 2.1} in \cite{strongN}, which proves a lower bound of $3/2$ on the quality of \verb|ramaekersTuples 5|. It is strictly weaker than Theorem 1.13, but was meant as ``a warm-up and illustration of Konyagin's technique'' before proceeding to the stronger theorem. Accordingly, Theorem 2.1 was formalised first to make sure that Redhill's definitions were canonical.
    \item The quality of \verb|nConjectureTuples n| is \textit{\href{https://github.com/Parcly-Taxel/Redhill/blob/arxiv-fixed/Redhill/BB94.lean}{at least}} $2n-5$. This was proved in the same paper \cite{bb94} that put forth the $n$-conjecture, which asserts that the quality is \textit{exactly} $2n-5$.
    \item Adamczewski's LLM-assisted disproof \cite{adamczewski} of Ramaekers's conjecture for $n=4$. We examine the original code in section \ref{llmcomp}.
\end{itemize}

\section{Upstreaming to \texttt{mathlib}} \label{upstreaming}

\verb|mathlib| is vast enough of a library that a paper has been written about growing and maintaining it sustainably \cite{growing-mathlib}, but its \href{https://leanprover-community.github.io/contribute/index.html#what-to-contribute-to-mathlib}{vague aspiration} of covering university-level mathematics, combined with the continuous development of Lean's kernel, means it will always have missing lemmas and definitions. Any sufficiently large project will reprove theory useful beyond its own scope, such as the real interpolation theorem in \cite{carleson}. In anticipation of upstreaming it to \verb|mathlib|, said theory should be put in a folder (typically called \verb|ToMathlib|, as Redhill and \cite{carleson} do) not importing any project-specific files.

The following pull requests (PRs) to \verb|mathlib| upstreaming material from Redhill have been merged:
\begin{itemize}
    \item \href{https://github.com/leanprover-community/mathlib4/pull/35673}{\#35673} added lemmas about the \href{https://github.com/leanprover-community/mathlib4/blob/a02a63c2100af866343c74550041885f8f39f099/Mathlib/RingTheory/Radical/Basic.lean#L144}{\Verb|radical|} of integers where there were previously none. This function appears in \verb|tupleQuality|, where we could have taken the absolute value first using \href{https://github.com/leanprover/lean4/blob/c29b6dda4f7c20e3eeaa717c4e565663c5cfa364/src/Init/Data/Int/Basic.lean#L329}{\Verb|Int.natAbs|} so as to only consider \verb|radical|s of natural numbers -- lemmas existed about the latter prior to this PR. For simplicity, we do not take the absolute value.
    \item \href{https://github.com/leanprover-community/mathlib4/pull/36340}{\#36340} added iff lemmas on when the product of two integers is positive, negative or non-positive, complementing \href{https://github.com/leanprover-community/mathlib4/blob/aa4180fd65f90dd50307a354941dfbee635cf9d2/Mathlib/Data/Int/Order/Basic.lean#L59-L60}{an existing lemma} about a non-negative product. The non-positive version is used to prove \verb|pair_of_sum_natAbs_lt|.
    \item \href{https://github.com/leanprover-community/mathlib4/pull/36799}{\#36799} added miscellaneous lemmas about primes and \verb|radical|.
    \item \href{https://github.com/leanprover-community/mathlib4/pull/36948}{\#36948} added convenience lemmas for \href{https://github.com/leanprover-community/mathlib4/blob/aa4180fd65f90dd50307a354941dfbee635cf9d2/Mathlib/RingTheory/Coprime/Basic.lean#L43}{\Verb|IsCoprime|}, specifically its preservation under subtracting multiples of one number from another, that $x$ is coprime to $x\pm1$ and that $x+1$ is coprime to $x-1$ if $2\mid x$. These lemmas facilitate the proofs of pairwise coprime entries in both major theorems of \cite{strongN}. At the request of \verb|mathlib| maintainers, corresponding lemmas for \href{https://github.com/leanprover-community/mathlib4/blob/aa4180fd65f90dd50307a354941dfbee635cf9d2/Mathlib/Algebra/Divisibility/Units.lean#L150}{\Verb|IsRelPrime|} were also added in the PR.
\end{itemize}

While upstreaming is good in general, it has certain limitations. Every PR to \verb|mathlib| must be reviewed by at least two maintainers before being merged, and those maintainers usually do not review PRs outside their areas of interest. The autoformalisation gold rush (see section \ref{introduction}) has created a surge of PRs to \verb|mathlib| from prompters of LLM-assisted projects, some of whom are unaware of \href{https://leanprover-community.github.io/contribute/values.html}{\Verb|mathlib|'s values}. Maintainers need time to separate high-quality PRs and provide feedback to authors of low-quality ones, so it may take a long time for an upstreaming PR to be merged.

\section{Comparison with LLM-assisted projects} \label{llmcomp}

Redhill and this paper are \textbf{completely} human-written. The library was created in February 2026, after Aristotle \cite{aristotle} and other autoformalisers had been made publicly available, but before those LLMs had seen extensive use. Formal proofs of both major theorems in \cite{strongN} were \href{https://github.com/Parcly-Taxel/Redhill/commit/a2ecf98bdc7cc64d5230af938e11e077aeb85199}{completed} in April that same year, but the library has been updated since then to stay abreast of developments in Lean.

Ramaekers's conjecture for $n=4$ was left open by \cite{strongN}. On 6 September 2026, an entirely LLM-written (primarily by GPT-6 Astra) Lean repository was published claiming to disprove it \cite{adamczewski} with a construction adapted from \cite{pakovich-zvonkin}. We have successfully integrated this claimed disproof into Redhill without any further LLM assistance, thereby conclusively validating it:
\begin{theorem}[\href{https://github.com/Parcly-Taxel/Redhill/blob/arxiv-fixed/Redhill/Four.lean\#L331}{\Verb|quality\_ramaekersTuples\_four\_ge|}]
    \verb|ramaekersTuples 4| has quality at least $9/8$, so Ramaekers's conjecture is false for $n=4$.
\end{theorem}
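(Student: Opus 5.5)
The plan is to follow the template at the beginning of section \ref{disproof}, but with the head occupying the whole tuple. No prime chain or \verb|VWPair| is needed, because $n=4$ leaves no room for padding entries. I would take the polynomial identity from \cite{adamczewski} (adapted from \cite{pakovich-zvonkin}). I have not checked its exact shape; what the plan needs is four integer polynomials $P_1,\dots,P_4\in\mathbb Z[x]$ with $\sum_i P_i=0$ and $\max_i\deg P_i=9$, with the $P_i$ having highly repeated roots so that the radical of $\prod_i P_i(x)$ is bounded by a polynomial of degree $9$ in general. I would then choose evaluation points $x_k$ that push one more factor's radical down by a degree, leaving an upper bound of degree $8$. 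The mechanism would be either a Pell-type family as in section \ref{oddcase}, where a quadratic factor $x^2+c$ becomes $(\text{const})\cdot t_k^2$ with $t_k\le x_k$, or a power family $x=(y+1)^{h!}$ as in section \ref{generalcase}. Whichever mechanism the repository uses, I would state it as a separate lemma with an explicit infinite solution sequence, proved by induction on the recurrence.

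The key steps, in order, are as follows.
\begin{enumerate}
\item Define \verb|tup| $x$ as \mmlean{Fin 4$\:\to\mathbb Z$} and prove $\sum_i a_i=0$ by \verb|ring|.
\item Prove pairwise coprimality for the chosen $x_k$. I would reduce each pair by subtracting multiples, using the \verb|IsCoprime| lemmas from \#36948, to a $\gcd$ of a constant with $x$ or $x\pm1$. The divisibility $y\mid x_k$ (or $x_k\equiv1$ modulo the relevant constants) then kills that $\gcd$, as in both earlier cases.
\item Prove only the ordinary \verb|SSC|, since \verb|ramaekersTuples| does not require \verb|StrongSSC|. For four entries this is a finite check: no single entry is $0$, and no two entries sum to $0$. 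The latter follows from degree or leading-term estimates of the form $|P_i(x)+P_j(x)|>0$ for large $|x|$; triples reduce to singletons because the total sum is $0$.
\item Bound $\operatorname{rad}$ by $C\cdot Q(x_k)$ with $\deg Q=8$, and bound $\max_i|a_i|\ge c\,x_k^9$.
\item Feed this into the general ``qualities tend to $c/d$'' argument already used for Theorems 1.13 and 1.14. This gives \verb|quality (ramaekersTuples 4)| $\ge 9/8$, and $9/8>1$ refutes \verb|RamaekersConjecture 4|.
\end{enumerate}

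I expect the main obstacle to be the coprimality step. The degree-$9$ identities of \cite{pakovich-zvonkin} have messier coefficients than the odd and general cases, so the resultants of pairs $P_i,P_j$ are nontrivial integers. Every prime dividing them must be excluded by the choice of evaluation points. I would first compute these resultants explicitly and make $y$ a multiple of their product. If some factor is already forced to be even in every tuple, I would instead divide out by the common content. A secondary obstacle is that radical bookkeeping with \verb|radical| on $\mathbb Z$ must survive repeated factors. I would handle it with the lemmas from \#35673 and \#36799, via $\operatorname{rad}(ab)\le\operatorname{rad}(a)\operatorname{rad}(b)$ and $\operatorname{rad}(a^k)=\operatorname{rad}(a)\le|a|$.
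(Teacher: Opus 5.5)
Your architecture matches the paper's: sum zero by \texttt{ring}, coprimality through Bezout-type constants, the subsum condition from size estimates, and a degree-$9$ versus degree-$8$ count. The gap is that the step which makes the construction work is left open, and the default you offer for it fails. In the actual tuple $a_0=u^9$, $a_1=(8-u)^5(u^2+20u+280)^2$, $a_2=-105(2u-3)^6$, $a_3=Q(u)$, the entry $a_2$ always contains $105=3\cdot5\cdot7$. So you need $7\nmid u$ because of $a_0$, and $u\not\equiv8\equiv1\pmod 7$ because of the factor $(8-u)^5$ of $a_1$. Similarly $u$ must be odd and prime to $3$ and $5$.

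Hence neither ``$y\mid x_k$'' nor ``$x_k\equiv1$ modulo the relevant constants'' is admissible. Moreover, the pairwise reductions generally end not in a gcd of a constant with $u$ or $u\pm1$, but in $\gcd(X(u),Y(u),c)$ with $X,Y$ irreducible factors such as $Q$ or $u^2+20u+280$; this depends only on $u\bmod c$. The paper writes $AX+BY=c$ for each such pair and checks by computation that $u=19$ makes every pair coprime modulo its $c$. It then imposes $u\equiv19\pmod M$, with $M$ the $\operatorname{lcm}$ of the $c$'s. The admissible residue class has to be found; it cannot be assumed to be $0$ or $1$.

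The second missing piece is an evaluation sequence that stays in this class while collapsing one factor's radical. Neither of your candidates does this as stated. Applying $(y+1)^{h!}$ to the variable forces $u\equiv1\pmod y$, which breaks coprimality at $7$. A Pell family on $u^2+20u+280$ would need its own equation plus a periodicity argument modulo $M$. The paper instead collapses the linear factor $2u-3$, which occurs to the sixth power in $a_2$. With $u_k=(35(2M+1)^k+3)/2$ one has $2u_k-3=35(2M+1)^k$, whose radical is bounded by the constant $35\operatorname{rad}(2M+1)$. Since $35=2\cdot19-3$ and $(2M+1)^k\equiv1\pmod{2M}$, this automatically gives $u_k\equiv19\pmod M$ and $u_k\ge19$. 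Anchoring a power family at $19$ differently would also work, e.g.\ $u=19(M+1)^k$ to collapse $a_0$ instead; the essential point is the anchoring.

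This yields $\operatorname{rad}(a_0a_1a_2a_3)\le C\,u_k(u_k-8)(u_k^2+20u_k+280)\,a_3$, of degree $8$, against $\max_i|a_i|=u_k^9$. Your subsum step is fine. Note, though, that the $u^9$ terms of $a_0$ and $a_1$ cancel, so $a_0+a_1=-(a_2+a_3)$ has degree $6$. The paper handles this step with the explicit inequalities $1<a_3<-a_2<a_0$ and $a_3<-a_1<a_0$ for $u\ge19$.
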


\begin{proof}
\href{https://github.com/Parcly-Taxel/Redhill/blob/arxiv-fixed/Redhill/Four.lean#L31-L37}{The construction} is broadly similar to those that we encountered in section \ref{disproof}, in that a tuple of polynomials is evaluated at specific points:
\begin{gather*}
    (a_0:=u^9, a_1:=(8-u)^5(u^2+20u+280)^2, a_2:=-105(2u-3)^6,\\
    a_3:=130032u^4 + 10728480u^3 - 202978980u^2 + 1238324220u - 2568934655)
\end{gather*}

It is an algebraic identity that $a_0+a_1+a_2+a_3=0$.
\begin{lstlisting}
/-- An abbreviation for the irreducible quartic polynomial that is the tuple's last term. -/
abbrev Q (u : ℤ) : ℤ :=
  130032 * u ^ 4 + 10728480 * u ^ 3 - 202978980 * u ^ 2 + 1238324220 * u - 2568934655

/-- The sequence of 4-tuples containing an infinite subsequence in `ramaekersTuples`
whose qualities tend to `9 / 8`. -/
def tup (u : ℤ) : Fin 4 → ℤ
  | 0 => u ^ 9
  | 1 => (8 - u) ^ 5 * (u ^ 2 + 20 * u + 280) ^ 2
  | 2 => -105 * (2 * u - 3) ^ 6
  | 3 => Q u
\end{lstlisting}

For each pair $X,Y$ of irreducible polynomials (including $-105$ in $a_2$) that are factors of different $a_i$, there exist two other polynomials $A,B$ and a constant $c$ such that $AX+BY=c$ with no common factor among all of $A,B,c$, so $X$ and $Y$ are coprime if they are coprime modulo $c$. Substituting $u=19$ makes all such pairs coprime modulo their corresponding $c$, and we can easily show that $u$ is coprime to $u^2+20u+280$ if $u\equiv19\bmod70$. Thus \verb|tup| has pairwise coprime entries if $u\equiv19\bmod M$, where
\begin{equation*}
    M=2\cdot3\cdot5\cdot7\cdot13\cdot23\cdot41\cdot103\cdot113\cdot127\cdot149
    \cdot197\cdot439\cdot1249\cdot395119\cdot649541\cdot399635598557777987
\end{equation*}
is the least common multiple of all moduli $c$.

When $u\ge19$, the inequalities $1<a_3<-a_2<a_0$ and $a_3<-a_1<a_0$ hold. It is then a simple matter to show that \verb|tup| satisfies the subsum condition (\textit{not} \verb|StrongSSC|) by considering each possible proper subset and calling the \verb|grind| tactic to prove that it cannot sum to $0$.

We have proved that \verb|tup| is in \verb|ramaekersTuples 4| if $u\ge19$ and $u\equiv19\bmod M$. An infinite sequence of evaluation points that always satisfies these conditions is $u_k=(35(2M+1)^k+3)/2$. To finish the proof of the lower bound on quality, we derive an upper bound of degree $8$ on $\operatorname{rad}(a_0a_1a_2a_3)$:
\begin{align*}
    \operatorname{rad}(a_0a_1a_2a_3)&\le 105\cdot u_k\cdot(8-u_k)\cdot(u_k^2+20u_k+280)\cdot\operatorname{rad}(2u_k-3)\cdot a_3\\
    &=105\cdot u_k\cdot(8-u_k)\cdot(u_k^2+20u_k+280)\cdot\operatorname{rad}(35(2M+1)^k)\cdot a_3\\
    &=C\cdot u_k\cdot(8-u_k)\cdot(u_k^2+20u_k+280)\cdot\underbrace{a_3}_{\text{degree 4}}
\end{align*}
where $C$ is a constant. The inequalities between the $a_i$ used to prove the subsum condition also show that the maximum absolute value is $a_0=u^9$. Once again, applying the argument at the beginning of section \ref{disproof} gives the result.
\end{proof}

We now compare the original code in \cite{adamczewski} with the same code as integrated into Redhill. We find some deficiencies in the former, which we turn into guidelines for good Lean formalisations:
\begin{itemize}
    \item \textit{Write canonical definitions.} They are meant to be unfolded downstream, so they should strive to minimise the inconvenience caused at that point.
    
    This is how the quality of a set of tuples is \href{https://github.com/tadamcz/n-conjecture-strong/blob/1995779c1885912ad77a1141c877556736ab60ef/StrongFour/Definitions.lean#L29-L30}{defined} in \cite{adamczewski}:
\begin{lstlisting}
noncomputable def qualityLimsup : EReal :=
  Filter.limsup (fun a : {a : Tuple // Admissible a} ↦ (quality a.1 : EReal))
    Filter.cofinite
\end{lstlisting}
    By abstracting the tuples in \verb|ramaekersTuples 4| into a \href{https://github.com/leanprover/lean4/blob/c29b6dda4f7c20e3eeaa717c4e565663c5cfa364/src/Init/Prelude.lean#L711}{\Verb|Subtype|} wrapping the membership predicate, rather than leaving them as a set defined by the predicate, a proof must deal with an extra layer of indirection when it unfolds \verb|qualityLimsup|. A similar comment applies to the use of \href{https://github.com/leanprover-community/mathlib4/blob/2da3ab0a20dc0c66f56a557e37df39f7d36cb953/Mathlib/Order/LiminfLimsup.lean#L71}{\Verb|Filter.limsup|} and \href{https://github.com/leanprover-community/mathlib4/blob/2da3ab0a20dc0c66f56a557e37df39f7d36cb953/Mathlib/Order/Filter/Cofinite.lean#L41}{\Verb|Filter.cofinite|} to express the limit superior, rather than \verb|sInf| on a ``dual'' set of quality bounds. The definition is also harder to generalise for using \verb|Subtype|: different sets of tuples of the same length have the same type (and so can be substituted in Redhill's \verb|quality|), whereas \verb|Subtype|s with different predicates are not equal.
    \item \textit{Inline short definitions.} Names are given to every irreducible non-constant factor appearing in the construction of \cite{adamczewski}:
\begin{lstlisting}
def a (u : ℤ) : ℤ := u
def b (u : ℤ) : ℤ := u - 8
def c (u : ℤ) : ℤ := u ^ 2 + 20 * u + 280
def ell (u : ℤ) : ℤ := 2 * u - 3
def d (u : ℤ) : ℤ := 130032 * u ^ 4 + 10728480 * u ^ 3 - 202978980 * u ^ 2 +
  1238324220 * u - 2568934655
\end{lstlisting}
    Except for \verb|d|, the brevity of the polynomials they represent means they do not achieve significant space savings over inlining, while their status as definitions requires an explicit unfolding step before certain tactics like \verb|ring| can be used on them. \verb|a| is particularly egregious in this regard, being merely the identity function over $\mathbb Z$.

    Long definitions like \verb|d| are fine, but if they will be unfolded frequently they should be declared \verb|abbrev|s to allow implicit unfolding, as Redhill's equivalent declaration \verb|Q| is.
    \item \textit{Use all tactics in moderation.} Like code in any other programming language, Lean code is read by humans. The most appropriate tactic does exactly what its informal rephrasing (see section \ref{tactic-vocab}) says it does, giving the strongest signal to a human about how the proof works.

    One violation of this guideline in \cite{adamczewski} is in \href{https://github.com/tadamcz/n-conjecture-strong/blob/1995779c1885912ad77a1141c877556736ab60ef/StrongFour/Result.lean#L95}{\Verb|qualityLimsup\_ge|}:
\begin{lstlisting}
theorem qualityLimsup_ge : ((9 / 8 : ℝ) : EReal) ≤ qualityLimsup := by
  ...
  have hq' : q < (9 / 8 : ℝ) := EReal.coe_lt_coe_iff.mp hq
  have hle : (q : EReal) ≤ qualityLimsup := by
    apply Filter.le_limsup_of_frequently_le'
    apply hcof.frequently
    exact (eventually_quality_gt q hq').frequently.mono fun n hn ↦
      EReal.coe_le_coe_iff.mpr hn.le
  exact not_le_of_gt hqL hle    
\end{lstlisting}
    All that the line beginning with \verb|have hq'| does is to rewrite the hypothesis \verb|hq|. It is better to call the rewriting tactic \verb|rw| on \verb|hq|:
\begin{lstlisting}
theorem qualityLimsup_ge : ((9 / 8 : ℝ) : EReal) ≤ qualityLimsup := by
  ...
  rw [EReal.coe_lt_coe_iff] at hq
  have hle : (q : EReal) ≤ qualityLimsup := by
    apply Filter.le_limsup_of_frequently_le'
    apply hcof.frequently
    exact (eventually_quality_gt q hq).frequently.mono fun n hn ↦
      EReal.coe_le_coe_iff.mpr hn.le
  exact not_le_of_gt hqL hle
\end{lstlisting}
\end{itemize}

\section{Conclusion} \label{conclusion}

The word ``formalisation'' with respect to a proof can be interpreted in two ways. One interpretation is ``illuminating the process'' -- paying careful attention to every construct and argument, distilling them into a poem that humans and machines can read equally well. We have followed this interpretation in Redhill, as have the contributors to Imperial's FLT project which is mentioned in section \ref{introduction}.

The other interpretation is ``obtaining code that verifies the final statement'', treating the existence of that code as the end of the matter. Many LLM-generated repositories following this interpretation \cite{platonic-classification,pegden-spgt,anthropic-flt,adamczewski} float around the Internet, unsure of where or how they fit into the mathematical literature.

We believe that mathematics will be better off when people adopt the first interpretation -- when they pick up Lean and make it their own.

\printbibliography

\end{document}